\documentclass[a4paper,leqno,12pt]{amsart}
%
%                   Realizing homotopy group actions
%                     David Blanc and Debasis Sen
%
%                  Revised version for Bull. Belg. Math. Soc. 
%                            February 11, 2014
%
%
\usepackage[all]{xy}
\usepackage{xspace}
\usepackage{amsmath}
\usepackage{amstext}
\usepackage{amsfonts}
\usepackage[mathscr]{euscript}
\usepackage{amscd}
\usepackage{latexsym}
\usepackage{amssymb}
\usepackage{diagrams}
%
%   -32 for viewing,  -10 for printing
%
\setlength{\topmargin}{-10mm}
\setlength{\textheight}{9.0in}
\setlength{\oddsidemargin}{.1in}
\setlength{\evensidemargin}{.1in}
\setlength{\textwidth}{6.0in}
%
% NEW types of ``theorems''
%
\theoremstyle{plain}
\swapnumbers
    \newtheorem{thm}{Theorem}[section]
    \newtheorem{prop}[thm]{Proposition}
    \newtheorem{lemma}[thm]{Lemma}
    \newtheorem{corollary}[thm]{Corollary}
    
    \newtheorem{subsec}[thm]{}
    \newtheorem*{thma}{Theorem A}
\theoremstyle{definition}
    \newtheorem{defn}[thm]{Definition}
    \newtheorem{example}[thm]{Example}

    \newtheorem{notation}[thm]{Notation}

\theoremstyle{remark}
        \newtheorem{remark}[thm]{Remark}

%
%   Special diagram/display environments
%
\newenvironment{myeq}[1][]
{\stepcounter{thm}\begin{equation}\tag{\thethm}{#1}}
{\end{equation}}

\newcommand{\mydiagram}[2][]
{\stepcounter{thm}\begin{equation}
     \tag{\thethm}{#1}\vcenter{\xymatrix{#2}}\end{equation}}
%use:  \mydiagram[\label{``label''}]{``xy-pic syntax''}
%
\newenvironment{mysubsection}[2][]
{\begin{subsec}\begin{upshape}\begin{bfseries}{#2.}
\end{bfseries}{#1}}
{\end{upshape}\end{subsec}}
\newcommand{\sect}{\setcounter{thm}{0}\section}
%
%            More space around math formulas
%
\newcommand{\wh}{\ -- \ }
\newcommand{\wwh}{-- \ }
\newcommand{\w}[2][ ]{\ \ensuremath{#2}{#1}\ }
\newcommand{\ww}[1]{\ \ensuremath{#1}}

\newcommand{\wb}[2][ ]{\ (\ensuremath{#2}){#1}\ }
\newcommand{\wref}[2][ ]{\ (\ref{#2}){#1}\ }
%
%    spacing
%
\newcommand{\hsp}{\hspace*{7 mm}}

\newcommand{\hsm}{\hspace*{2 mm}}
\newcommand{\vsn}{\vspace{1 mm}}

\newcommand{\vsm}{\vspace{3 mm}}
%
%        Arrows and symbols
%
\newcommand{\hra}{\hookrightarrow}
\newcommand{\xra}[1]{\xrightarrow{#1}}
\newcommand{\xepic}[1]{\xrightarrow{#1}\hspace{-5 mm}\to}

\newcommand{\lra}[1]{\langle{#1}\rangle}

\newcommand{\epic}{\to\hspace{-3 mm}\to}

\newcommand{\rest}[1]{\lvert_{#1}}
%
%        Greek letters and other symbols
%

%
%          operators
%
\newcommand{\Aut}{\operatorname{Aut}}
\newcommand{\colim}{\operatorname{colim}}

\newcommand{\eval}{\operatorname{ev}}
\newcommand{\he}{\operatorname{h.e.}}
\newcommand{\ho}{\operatorname{ho}}

\newcommand{\hocolim}{\operatorname{hocolim}}

\newcommand{\Id}{\operatorname{Id}}
\newcommand{\lenG}{\operatorname{len}_{G}}
\newcommand{\Obj}{\operatorname{Obj}\,}
\newcommand{\op}{\sp{\operatorname{op}}}

%
%            Mapping spaces
%
\newcommand{\map}{\operatorname{Map}}

%

%
%          Categories
%
\newcommand{\hy}[2]{{#1}\text{-}{#2}}
\newcommand{\cA}{{\mathcal A}}
\newcommand{\AX}[1]{\Aut({#1})}
\newcommand{\AXp}[1]{\Aut\sb{\ast}({#1})}

\newcommand{\G}{\operatorname{haut}}
\newcommand{\GX}[1]{\G({#1})}
\newcommand{\hGX}[1]{\widehat{\G}({#1})}
\newcommand{\GXp}[1]{\G\sb{\ast}({#1})}

\newcommand{\PP}{\mathcal P}
\newcommand{\hPXY}[1]{\widehat{\PP}\sb{{#1}}}
\newcommand{\tPXY}[1]{\check{\PP}\sb{{#1}}}
\newcommand{\PXY}[1]{\PP\sb{{#1}}}
\newcommand{\cQ}{\mathcal Q}
\newcommand{\QXY}[2]{{\cQ}\sb{{#1},{#2}}}
\newcommand{\hQXY}[2]{\widehat{\cQ}\sb{{#1},{#2}}}

\newcommand{\cO}{{\EuScript O}}

\newcommand{\TT}{{\EuScript Top}}

\newcommand{\GT}{\hy{G}{\TT}}

\newcommand{\Ta}{\TT_{\ast}}
\newcommand{\hG}{\widehat{G}}

\newcommand{\hz}{\widehat{\zeta}}

\newcommand{\vre}{\varepsilon}

\newcommand{\hvr}{\widehat{\vre}}
\newcommand{\hdel}{\widehat{\delta}}
\newcommand{\hrho}{\widehat{\rho}}

\newcommand{\ZZ}{\mathbb Z}
%
%       Diagrams
%

\newcommand{\uX}{\underline{\mbox{X}}}
\newcommand{\ubX}[1]{\underline{\mbox{$\bX$}}\sb{#1}}

%
%          Values of functors
%
\newcommand{\BG}{\bB G}
\newcommand{\EG}{\bE G}

\newcommand{\EGX}[2]{\bE{#1}\times{#2}}
\newcommand{\EGXp}[2]{\bE{#1}\ltimes{#2}}

\newcommand{\WH}{W_{H}}

\newcommand{\XHH}{\bX\sp{H}_{H}}
\newcommand{\XpHH}{(\bX')\sp{H}_{H}}
\newcommand{\tXH}{\tX_{H}}
\newcommand{\tXHH}{\tX\sp{H}_{H}}
\newcommand{\uXHH}{\underline{\bX}\sp{H}_{H}}

%
%           Calligraphic
%

\newcommand{\F}{\mathcal{F}}
\newcommand{\hF}{\widehat{\F}}

\newcommand{\OG}{\cO_{G}}

\newcommand{\TOG}{\TT\sp{\OG\op}}

\newcommand{\LG}{\Lambda\op}
\newcommand{\cj}[2]{\phi\sp{#1}\sb{#2}}

\newcommand{\cp}[2]{(\widetilde{\phi}\sp{#1}\sb{#2})\op}

\newcommand{\hf}{\widehat{f}}
\newcommand{\hg}{\widehat{g}}

\newcommand{\wtX}{{\raisebox{-1.5ex}{$\stackrel{\textstyle X}{\sim}$}}}
%

%
%         Boldface
%
\newcommand{\bB}{\mathbf{B}}
\newcommand{\bC}{\mathbf{C}}
\newcommand{\bD}[1]{\mathbf{D}\sp{#1}}
\newcommand{\bE}{\mathbf{E}}

\newcommand{\bS}[1]{\mathbf{S}\sp{#1}}

\newcommand{\bX}{\mathbf{X}}
\newcommand{\Xp}{\bX_{1}}

\newcommand{\Xpp}{\bX'}

\newcommand{\Xs}{\bX\sb{\ast}}
\newcommand{\hX}{\widehat{\bX}}

\newcommand{\tX}{{\raisebox{-1.5ex}{$\stackrel{\textstyle \bX}{\sim}$}}}

\newcommand{\bY}{\mathbf{Y}}
\newcommand{\hY}{\widehat{\bY}}
\newcommand{\Ys}{\bY\sb{\ast}}
\newcommand{\bZ}{\mathbf{Z}}
\newcommand{\hZ}{\widehat{\bZ}}

\begin{document}
%
%           Title
%
\title{Realizing homotopy group actions}
\author{David Blanc}
\author{Debasis Sen}
\address{Department of Mathematics\\ University of Haifa\\
31905 Haifa\\ Israel}
\email{blanc@math.haifa.ac.il,\ sen\_deba@math.haifa.ac.il}

\date{\today}

\subjclass[2010]{Primary: 55P91; \ secondary: 55S35, 55R35, 58E40}
\keywords{Group actions, equivariant homotopy type, Bredon theory, obstructions, 
homotopy actions}

\begin{abstract}
For any finite group $G$, we define the notion of a
\emph{Bredon homotopy action} of $G$, modelled on the diagram of fixed point
sets \w{(\bX\sp{H})\sb{H\leq G}} for a $G$-space $\bX$, together
with a pointed homotopy action of the group \w{N\sb{G}H/H} on 
\w[.]{\bX\sp{H}/(\bigcup\sb{H<K} \bX\sp{K})}
We then describe a procedure for constructing a suitable diagram 
\w{\uX:\OG\op\to\TT} from this data, by solving a sequence of elementary 
lifting problems. If successful, we obtain a $G$-space \w{\bX'} realizing the 
given homotopy information, determined up to Bredon $G$-homotopy
type. Such lifting methods may also be used to understand other homotopy 
questions about group actions, such as transferring a $G$-action along a 
map \w[.]{f:\bX\to \bY}
\end{abstract}

\maketitle

\setcounter{section}{0}

%
%c0   Introduction
%
\section*{Introduction}
\label{cint}

The naive notion of a homotopy action of a group $G$ on a topological space $\bX$ 
can be described as the choice of a homotopy class of a map
\w[,]{\BG\to\bB\GX{\bX}} where \w{\GX{\bX}} is the monoid of self-homotopy 
equivalences (see \S \ref{shaction}). This always lifts to a strict action, unique 
up to Borel equivalence (see \S \ref{dborel}). However, the $G$-actions 
we obtain in this way will be free, so the more delicate aspects of equivariant 
topology are not visible in this way.

A more informative approach to equivariant homotopy theory, due to Bredon, 
studies $G$-spaces $\bX$ up to $G$-homotopy equivalence \wh that is, $G$-maps 
having $G$-homotopy inverses (see \cite{BredE}). This is equivalent to the 
homotopy theory of diagrams \w{\uX:\OG\op\to\TT} (where \w{\OG} is the orbit 
category of $G$ and \w{\uX(G/H)} is the fixed point set \w{\bX\sp{H}} \wwh see 
\S \ref{socat} and \cite{ElmS}). Dwyer and Kan showed that this in turn is 
equivalent to a homotopy theory of a certain diagram of fibrations 
(see \cite{DKanSR,DKanR}).

The purpose of this paper is to define a notion of homotopy action in Bredon 
equivariant homotopy theory, and describe an associated inductive procedure 
for realizing such an action by a continuous one. 

One might be tempted to say that a homotopy action of $G$ should simply be a 
homotopy-commutative diagram \w[.]{\uX':\OG\op\to\ho\TT} We then have available 
the obstruction theory of Dwyer, Kan, and Smith for rectifying general 
homotopy-commutative diagrams (cf.\ \cite{DKSmH,DKanEH}), which we can use 
to try to lift \w{\uX'} to a strict diagram \w[,]{\uX:\OG\op\to\TT} yielding a 
$G$-space, unique up to Bredon equivalence.

However, the orbit category \w{\OG} can be quite complicated: it includes various 
isomorphisms \w{G/H\cong G/H\sp{a}} for \w[,]{a\in G} and in particular an action of 
\w{N\sb{G}H} as the automorphisms of \w{G/H} for each \w[.]{H\leq G} In the 
Dwyer-Kan-Smith approach, all the morphisms of \w{\OG} are treated on an 
equal footing, and must all be made to fit together at one time 
(with increasing levels of coherence). In particular, it does not allow us 
to interpret the initial data in terms of homotopy actions
of each \w{N\sb{G}H} on \w[.]{\uX'(G/H)} 

The version of homotopy action that we define here involves an ordinary 
diagram of spaces (with no group actions), which we assume for simplicity 
to be strict. We do require a certain amount of equivariant rectification 
in addition, but we keep this to the minimum, and in a form that reduces 
to an elementary lifting problem, in the spirit of \cite{CookR} and \cite{DKanR}, 
starting with certain ordinary homotopy actions of \w[.]{N\sb{G}H}

\begin{mysubsection}{Bredon homotopy actions}\label{sbhaction}
We let $\Lambda$ denote the partially ordered set of subgroups of $G$,
and define a \emph{Bredon homotopy action} of $G$ to consist of:
\begin{itemize}
\item A diagram \w[;]{\wtX:\LG{}\to\TT} 
\item For each conjugacy class \w[,]{\lra{H}} a pointed homotopy action of 
\w{\WH:=N\sb{G}H/H} on the homotopy cofiber \w{\tXHH} of the obvious map 
\w{\hocolim\sb{K>H}\,\wtX(K)\to\wtX(H)} for some representative \w[.]{H\in\lra{H}} 
\end{itemize}

If \w{H'} and $H$ are conjugate in $G$, we must have a homotopy-commuting square:
\mydiagram[\label{eqhosquare}]{
\hocolim\sb{K>H}\,\wtX(K)\ar[rr] \ar[d]\sp{\simeq} && \wtX(H) \ar[d]\sp{\simeq} \\
\hocolim\sb{K'>H'}\,\wtX(K')\ar[rr]&& \wtX(H')~
}
\noindent with vertical homotopy equivalences.
\end{mysubsection}

\begin{mysubsection}{Realizing Bredon homotopy actions}\label{srbhaction}
We wish to realize such a Bredon homotopy action by a topological action,
using descending induction on the subgroups of $G$:
without specifying the $G$-space $\bX$ itself, assume that for some
\w{H\leq G} we have constructed a partial diagram $\uX$ consisting of spaces 
\w{\uX(K)\simeq\wtX(K)} (to be thought of as of ``fixed point sets''
\w{\bX\sp{K}} for the putative $G$-space $\bX$) for all groups
\w[,]{H<K\leq G} together with inclusions \w{i\sp{\ast}:\uX(L)\hra\uX(K)} for 
\w[,]{i:K\hra L} compatible with an action of $G$ on $\uX$ by
\w[.]{\uX(K)\mapsto\uX(K\sp{a})}

Note that we may filter the collection of subgroups of $G$ (or the objects of 
\w[)]{\OG} by letting \w{\F\sb{k}} consist of those subgroups $H$ for which 
there is a chain of proper inclusions \w[.]{H=H\sb{0}<H\sb{1}<\dotsc<H\sb{k}=G} 
If we set \w[,]{\ubX{H}:=\bigcup\sb{H<K}\,\uX(K)} by induction on this filtration
we assume that we have actions of \w{\WH} on \w{\ubX{H}} and \w{\uXHH} (the 
latter realizing the given pointed homotopy action on \w{\tXHH} \wwh see Appendix). 
These fit into a homotopy cofibration sequence:
\begin{myeq}[\label{eqint}]
\ubX{H}~\to~\wtX(H)~\to~\uXHH~.
\end{myeq}
\noindent The key ingredient in the inductive procedure for realizing a 
Bredon homotopy action as above is the ``interpolation'' problem: given two 
$\WH$-spaces such as \w{\uX\sb{H}} and \w[,]{\uXHH} and two maps as in 
\wref[,]{eqint} how to obtain a compatible \ww{\WH}-action on the middle 
space \w[.]{\wtX(H)} This can be reduced to a lifting problem (see 
Propositions \ref{pinterp} and \ref{pintrp}). If we succeed in solving it, 
we have extended our diagram $\uX$ to $H$, too. 

Our main result shows that if this procedure can be completed
for all \w[,]{H\leq G} we obtain a full \ww{\OG\op}-diagram $\uX$, and thus 
a $G$-space $\bX$ realizing the given Bredon homotopy action 
(cf.\ \cite{ElmS}):

\begin{thma}
A Bredon homotopy action \w{\cA:=\lra{\wtX,(\Phi\sp{\ast}\sb{H})\sb{H\leq G}}} 
for a finite group $G$ can be realized by a $G$-space $\bX$ if and only 
if one can inductively construct a sequence of cofibrant diagrams 
\w{(\uX\sb{k}:\F\sb{k}\to\TT)} realizing $\cA$.
Moreover, one can extend \w{\uX\sb{k}} to \w{\uX\sb{k+1}} if and only if for each 
\w[,]{H\subseteq\F\sb{k+1}\setminus\F\sb{k}} one can find a map \w{\Psi\sb{H}} 
making the following diagram of topological spaces commute:
$$
\xymatrix{
&&&&& \bB\GX{\ubX{H}}\\
\bB\WH \ar[rrrrru]\sp{\bB\zeta\sb{\ubX{H}}} \ar[rrrrrd]\sb{\bB\zeta\sb{\uXHH}} 
\ar@{.>}[rrr]^<<<<<<<<<<<<<<<<<<{\Psi\sb{H}} &&& \bB\QXY{j\sb{H}}{q\sb{H}} 
\ar[rru]\sb{\bB\delta\sb{j\sb{H}}\circ\bB\mu} 
\ar[rrd]\sp{\bB\vre\sb{q\sb{H}}\circ\bB\nu} \\
&&&&& \bB\GX{\uXHH}
}
$$
\end{thma}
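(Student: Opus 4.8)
The plan is to prove the two assertions in turn: first that realizability of $\cA$ by a $G$-space is equivalent to the existence of an inductive tower $(\uX\sb{k})$, and then that the extension step producing $\uX\sb{k+1}$ from $\uX\sb{k}$ is governed, one conjugacy class at a time, by the lifting problem for $\Psi\sb{H}$.

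For the first equivalence, recall (see \cite{ElmS} and \S\ref{socat}) that $G$-spaces up to Bredon $G$-homotopy equivalence correspond to diagrams $\uX:\OG\op\to\TT$ up to objectwise weak equivalence, and that such a diagram amounts to a diagram on the poset $\LG$ of subgroups carrying a compatible system of $\WH$-actions on the objects $\uX(H)$ (the automorphism group of $G/H$ in $\OG$ being $\WH$, the subconjugacy relations furnishing the remaining morphisms) \wh this is precisely the picture underlying the notion of Bredon homotopy action in \S\ref{sbhaction}. If $\bX$ realizes $\cA$, apply Elmendorf's functor, restrict to $\LG$, take a cofibrant replacement, and restrict further to the subcategories $\F\sb{k}$: since $\uX(H)\simeq\bX\sp{H}\simeq\wtX(H)$ and the induced $\WH$-action on the homotopy cofiber of $\ubX{H}\to\uX(H)$ is $\Phi\sp{\ast}\sb{H}$, this yields a nested sequence of cofibrant diagrams realizing $\cA$. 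Conversely, a tower $(\uX\sb{k})$ with the $\F\sb{k}$ eventually exhausting all subgroups assembles, over its top stage, into a cofibrant $\LG$-diagram carrying a compatible system of Weyl actions, hence into an $\OG\op$-diagram; Elmendorf's functor turns it into a $G$-space $\bX$ with $\bX\sp{H}\simeq\uX(H)\simeq\wtX(H)$ and with the prescribed cofiber actions, so $\bX$ realizes $\cA$.

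The second assertion is the analysis of one extension step. Its new objects are the subgroups $H\in\F\sb{k+1}\setminus\F\sb{k}$, and since every $K>H$ lies in an earlier stage, the $\WH$-space $\ubX{H}=\bigcup\sb{H<K}\uX(K)$ is already carried by $\uX\sb{k}$, as in \S\ref{srbhaction}, while the $\WH$-space $\uXHH$ realizing $\Phi\sp{\ast}\sb{H}$ is supplied by the Appendix; together they fit into the cofibration sequence \wref[.]{eqint} Fixing a representative $H$ of each conjugacy class, extending the diagram over $H$ is exactly the interpolation problem of \S\ref{srbhaction}: construct a $\WH$-action on $\wtX(H)$ for which $j\sb{H}:\ubX{H}\to\wtX(H)$ and $q\sb{H}:\wtX(H)\to\uXHH$ become equivariant. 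By Propositions \ref{pinterp} and \ref{pintrp}, the solutions of this problem are identified with the maps $\Psi\sb{H}:\bB\WH\to\bB\QXY{j\sb{H}}{q\sb{H}}$ into the classifying space of the monoid $\QXY{j\sb{H}}{q\sb{H}}$ of self-equivalences of the cofibration $\ubX{H}\to\wtX(H)\to\uXHH$ whose composites with the two projections $\bB\delta\sb{j\sb{H}}\circ\bB\mu$ and $\bB\vre\sb{q\sb{H}}\circ\bB\nu$ recover the classifying maps $\bB\zeta\sb{\ubX{H}}$ and $\bB\zeta\sb{\uXHH}$ of the two already-constructed end actions \wh i.e.\ with the maps $\Psi\sb{H}$ making the displayed diagram commute. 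From such a $\Psi\sb{H}$ one extracts the $\WH$-space $\uX(H)\simeq\wtX(H)$ together with equivariant $j\sb{H},q\sb{H}$, and defines the conjugate copies $\uX(H')$ and their structure maps by transport along the vertical equivalences of \wref[,]{eqhosquare} which force these choices to cohere over the class $\lra{H}$; passing to an objectwise-equivalent cofibrant diagram then produces $\uX\sb{k+1}$. Conversely, a given $\uX\sb{k+1}$ exhibits, for each new $H$, a $\WH$-space $\uX(H)$ with equivariant $j\sb{H}$ and $q\sb{H}$, hence a solution of the interpolation problem, which the same two propositions convert back into a map $\Psi\sb{H}$ with the stated property.

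The bookkeeping above \wh the filtration by the $\F\sb{k}$, the cofibrant replacements, Elmendorf's equivalence, and the passage between $\LG$-diagrams-with-Weyl-actions and $\OG\op$-diagrams \wh is routine. The substantive point, and the step I expect to be the main obstacle, is the identification invoked in the third paragraph: one must build the monoid $\QXY{j\sb{H}}{q\sb{H}}$ and its structure maps $\mu,\nu,\delta\sb{j\sb{H}},\vre\sb{q\sb{H}}$ so that $\bB\QXY{j\sb{H}}{q\sb{H}}$ genuinely classifies $\WH$-actions on the middle term of the cofibration compatible with prescribed actions on the two ends, and then check that applying $\bB$ converts the homotopy-coherent compatibility of the two end actions into the strict commutativity of the two triangles. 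This is the content of Propositions \ref{pinterp} and \ref{pintrp}; granting them, Theorem~A follows by concatenating the inductive steps as described.
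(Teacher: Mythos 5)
Your proposal is correct and follows essentially the same route as the paper's proof of Theorem \ref{trha}: one direction via the fixed-point diagrams of Example \ref{egrbha} and Elmendorf's Theorem \ref{telm}, the converse by assembling the tower into a full \w{\OG\op}-diagram and applying Elmendorf again, and the extension step by realizing \w{\Phi\sp{\ast}\sb{H}} via Proposition \ref{ppfree}, invoking the interpolation results (Propositions \ref{pinterp} and \ref{pintrp}) for a chosen representative \w[,]{H} and handling the conjugate copies by transport along the conjugation isomorphisms (the paper twists the \w{\WH}-action on \w{\bZ\sb{(H)}} by \w[,]{W\sb{H\sp{a}}\cong\WH} with \wref{eqhosquare} guaranteeing coherence, which is what your ``transport'' step amounts to). The only cosmetic differences are that the paper treats the first filtration step separately (self-normalizing versus normal maximal subgroups) and states the lifting condition up to homotopy, but neither affects the substance of your argument.
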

\noindent [See Theorem \ref{trha} below; the topological monoid \w{\QXY{f}{g}}  
is defined in \S \ref{dspinterpol}].
\end{mysubsection}

\begin{mysubsection}{Related lifting problems}\label{srlp}
Along the way we discuss three related but simpler questions, of independent 
interest, and show how they too may be reduced to lifting problems for 
appropriate fibrations:

\begin{enumerate}
\renewcommand{\labelenumi}{(\roman{enumi})\ }
\item How to extend a $G$-action on a space $\bX$ along a map \w[;]{f:\bX\to\bY}
\item How to lift a $G$-action on $\bY$ along a map \w[;]{f:\bX\to\bY}
\item How to make a map \w{f:\bX\to\bY} between two $G$-spaces into a $G$-map.
\end{enumerate}
See Propositions \ref{prelext} and \ref{prlex}.
\end{mysubsection}

\begin{mysubsection}{Obstructions}\label{sobst}
All these lifting problems have associated obstruction theories, described in 
terms of (Moore-)Postnikov towers (see Proposition \ref{pinterpob}), and thus 
similar in spirit to Cooke's original approach to the realization problem for 
homotopy actions (see \cite{CookR}). These differ from the obstruction theory 
of \cite{DKSmH}, though unfortunately neither version is easily computable.

We may thus conclude from Theorem A that the obstructions of 
Proposition \ref{pinterpob} are the only ones to realizing a Bredon homotopy 
action, and the difference obstructions distinguish between the resulting 
realizations up to $G$-homotopy equivalence (see Corollary \ref{crha}).
\end{mysubsection}

\begin{remark}\label{rha}
The question of realizing homotopy actions is an old one, going back to work 
of Cooke in \cite{CookR} (see also \cite{LSmiRH,OprL,ZabGC,SVogC}). Many 
approaches to this and related problems appear in the literature: since 
(homotopy) actions induce maps between classifying spaces of groups and monoids, 
any information about the latter is relevant to the question at hand. Methods 
for analyzing maps between classifying spaces were developed by Dwyer, 
Zabrodsky, Jackowski, McClure, Oliver, and others in the 1980's 
(cf.\ \cite{DMolH,DZabM,JMOlH}), and later by Grodal and Smith for actions 
on spheres, in \cite{GSmitC}, based on Lannes theory (cf.\ \cite{LannE}). 
Our approach here is more elementary, and perhaps more conceptual, although 
the machinery for calculating our obstructions is not as well-developed.
\end{remark}

\begin{notation}\label{snot}
The category of topological spaces will be denoted by $\TT$, and its
objects will be denoted by boldface letters: $\bX$, \w[.]{\bY\dotsc}
The category of pointed topological spaces \w{\Xs=(\bX,x\sb{0})} is denoted 
by \w[.]{\Ta}

A \emph{$G$-space} is a topological space $\bX$ equipped with a left $G$-action, 
and the category of $G$-spaces with \emph{$G$-maps} (i.e., $G$-equivariant 
continuous maps) will be denoted by \w[.]{\GT} We write \w{\bX\sp{H}} for the 
\emph{fixed point set} \w{\{x\in\bX~:hx=x \ \forall h\in H\}} of $X$ under 
a subgroup \w[.]{H\leq G}

An important example is a $G$-\emph{CW complex}, obtained by attaching 
\emph{$G$-cells} of the  form \w{G/H\times\bD{n+1}} for \w{n\geq -1} 
(see \cite{IllmE1}). For finite $G$, this is equivalent to $\bX$ being a 
CW-complex on which $G$ acts cellularly (see \cite[II, \S 1]{DiecTG}).

An action of a (discrete) group $G$ on $\bX$ is given by a homomorphism
\w[,]{\varphi\sb{\bX}:G\to\AX{\bX}} which we call the \emph{action map} of $\bX$. 
We call the composite \w{\zeta\sb{\bX}:=i\sb{\bX}\circ\varphi\sb{\bX}:G\to\GX{\bX}} the
\emph{monoid action map} of $\bX$, where \w{i\sb{\bX}:\AX{\bX}\hra\GX{\bX}} is the 
inclusion. Similarly, a pointed action of $G$ on \w[,]{\Xs} given by the 
\emph{pointed action map}  \w[,]{\varphi\sb{\Xs}\sp{\ast}:G\to\AXp{\Xs}} has a 
\emph{pointed monoid action map} 
\w[.]{\zeta\sp{\ast}\sb{\bX}:=i\sb{\Xs}\circ\varphi\sp{\ast}\sb{\Xs}:G\to\GXp{\Xs}}
\end{notation}

\begin{mysubsection}{Organization}
\label{sorg}
In Section \ref{cgsoc} we provide some basic background on
$G$-spaces, the orbit category, and equivariant homotopy theory.
In Section \ref{ctga} we address the question of transferring group actions 
along a map (cf.\ \S \ref{srlp}), as preparation for the interpolation problem, 
discussed in Section \ref{ciga} (both for arbitrary groups).
In Section \ref{cdha} we define the notion of a Bredon homotopy action and 
prove our main result (for finite $G$). In the Appendix, we review the notion 
of a  pointed homotopy action.
\end{mysubsection}

%\begin{ack}
%
%We would like to thank the referee for his or her comments and suggestions.
%
%\end{ack}

%
%c1   $G$-Spaces and the Orbit Category
%
\sect{$G$-Spaces and the Orbit Category}
\label{cgsoc}

In this section we recall some basic facts about $G$ spaces, and the Borel 
and Bredon approaches to equivariant homotopy theory.

\begin{mysubsection}{Homotopy actions}\label{shaction}
Let \w{\GX{\bX}} denote the strictly associative topological monoid of self-homotopy equivalences of a topological space $\bX$. If $G$ is a group, any monoid map \w{\zeta\sb{\bX}:G\to\GX{\bX}} factors through the submonoid \w{\AX{\bX}} of invertible elements (self-homeomorphisms) in \w[,]{\GX{\bX}} so it makes $\bX$ into a $G$-\emph{space}, equipped with a continuous $G$-\emph{action}.

However, \w{\AX{\bX}} is not a homotopy invariant of $\bX$, while \w{\GX{\bX}} is. A \emph{homotopy action} of $G$ on $\bX$ is therefore defined to be 
the homotopy class of a map \w{\Phi:\BG\to\bB\GX{\bX}} (see \cite{DDKaE,DWilH} 
and compare \cite{SulG}). In particular, a group action determines a homotopy action, by setting \w[.]{\Phi:=B\zeta\sb{\bX}}

If \w{\Xs=(\bX,x\sb{0})} is pointed, \w{\GX{\bX}} has a sub-monoid \w{\GXp{\Xs}} consisting of the pointed self-homotopy equivalences of $\bX$, and a \emph{pointed homotopy action} of $G$ on \w{\Xs} is (the homotopy class of) a map \w[.]{\Phi\sp{\ast}:\BG\to\bB\GXp{\Xs}}

The inclusion \w{j:\GXp{\Xs}\hra\GX{\bX}} fits into a homotopy fibration sequence:
\begin{myeq}[\label{eqhfibseq}]
\GXp{\Xs}~\xra{j}~\GX{\bX}~\xra{\eval\sb{x\sb{0}}}~\bX~\xra{k}~
\bB\GXp{\Xs}~\xra{\bB j}~\bB\GX{\bX}~,
\end{myeq}
\noindent where \w{\bB j} is universal for Hurewicz fibrations with
homotopy fiber $\bX$ (cf.\ \cite{AllaC,StasC}, and see
\cite[Theorem 5.6]{BGMoS} \& \cite[Proposition 4.1]{DZabU}). 

Note that a \emph{free} $G$-space $\bX$ is the total space of a principal 
$G$-bundle over the orbit space \w[,]{\bX/G} which is classified by a map 
\w[.]{\vartheta:\bX/G\to\BG} If we let \w{\Xp~\hra~E\sb{\theta}~\xra{\theta}~\bB G} 
denote the pullback of \w{\bB j} along $\Phi$, this fits into a commuting diagram 
of fibration sequences:
\mydiagram[\label{eqrowcol}]{
\ar @{} [ddrrrr] |<<<<<<<<<<<<<<<<<<<<<<<<<<<<<{\framebox{\scriptsize{PB}}}
\ast \ar[rr] \ar[d] && G \ar[rr]\sp{=} \ar[d] && G \ar[d] \\
\Xp \ar@{^{(}->}[rr]\sp{\simeq} \ar[d]\sp{=} &&
\Xpp \ar@{->>}[rr] \ar@{->>}[d]\sb{\xi} &&
\bE G \ar@{->>}[d] \\
\Xp\ar@{^{(}->}[rr] && E\sb{\theta} \ar@{->>}[rr]\sp{\theta} && \bB G,
}
\noindent thus yielding a (free) topological $G$-action on \w{\Xpp\sim\bX}
(with \w{\theta\in[\Xpp/G,\,\bB G]} corresponding to \w{\vartheta\in[\bX/G,\,\bB G]} 
under this equivalence, if $\bX$ is a free $G$-space).
Since we cannot guarantee that $G$ will act on $\bX$ itself, this is sometimes 
refereed to as a \emph{proxy} action (cf.\ \cite{DWilH}).

Note that for every $G$-space $\bX$ there is a $G$-map \w{\bX\times\EG\to\bX} 
which is a homotopy equivalence (out of a free $G$-space). With this notion of 
$G$-weak equivalence, we obtain the Borel version of equivariant homotopy theory, 
which thus reduces to the study of principal $G$-bundles.

We say that the homotopy action $\Phi$ is \emph{realized} by a free $G$-space 
\w{\Xp} if the corresponding principal $G$-bundle \wref{eqrowcol} is
classified by a map $\theta$ which is the pullback of \wref{eqhfibseq} along 
$\Phi$; we have just seen that any homotopy action is realizable. Similarly, 
any pointed homotopy action is realizable by a pointed topological action 
(see Appendix).
\end{mysubsection}

Let $G$ be a fixed group.  Bredon's approach to $G$-equivariant homotopy theory 
(cf.\ \cite{BredE,ElmS}) reduces the study of a $G$-space $\bX$ to the system 
of fixed point sets under the subgroups of $G$. To describe it, we recall 
the following:

\begin{mysubsection}{The orbit category}\label{socat}
The \emph{orbit category} \w{\OG} of $G$ has the cosets \w{G/H} 
(for each \w[)]{H\leq G} as objects, and $G$-equivariant maps as morphisms.

Any map \w{G/H\to G/K} in \w{\OG} can be factored as
\w{i\sb{\ast}:G/H\to G/K\sp{a\sp{-1}}} (induced by the inclusion
\w[),]{i:H\hra K\sp{a\sp{-1}}} followed by an isomorphism
\w[,]{\cj{K\sp{a\sp{-1}}}{a}:G/K\sp{a\sp{-1}}\to G/K} where
\w[,]{K\sp{a\sp{-1}}:=aKa\sp{-1}} for \w[,]{a\in G} and \w{\cj{K\sp{a\sp{-1}}}{a}} is
induced by the right translation \w[.]{g\mapsto ag}
Two maps \w{\cj{K\sp{a\sp{-1}}}{a}\circ i\sb{\ast}}
and \w{\cj{K\sp{b\sp{-1}}}{b}\circ j\sb{\ast}} from \w{G/H} to \w{G/K}
are the same in \w{\OG} if and only if \w[.]{a\sp{-1}b\in K}
Thus the automorphism group \w{\WH:=\Aut\sb{\OG}(G/H)} of \w{G/H\in\OG} is
\w{N\sb{G}H/H} (where \w{N\sb{G}H} is the normalizer of $H$ in $G$).
\end{mysubsection}

\begin{mysubsection}{$\OG$-diagrams}\label{sgdiagrams}
An \ww{\OG\op}-\emph{diagram} in $\TT$ is a functor \w[,]{\Psi:\OG\op\to\TT} and 
the category of all such will be denoted by \w[.]{\TT\sp{\OG\op}}
The main example we have in mind is the fixed point set diagram $\uX$
associated a $G$-space $\bX$, defined \w[.]{\uX(G/H):=\bX\sp{H}}

Since $\TT$ is a simplicial model category, 
\w{\TT\sp{\OG\op}} has a projective simplicial model category
structure in which a map \w{f:\Psi\to\Psi'} of \ww{\OG\op}-diagrams is a
weak equivalence (respectively, a fibration) if for each \w[,]{H\leq G}
\w{f(G/H):\Psi(G/H)\to\Psi'(G/H)} is a weak equivalence (respectively, a
fibration). See \cite[Theorem 11.7.3]{PHirM}.

There is an analogous simplicial model category structure on \w[,]{\GT}
in which a  $G$-map \w{f:\bX\to\bY} is a weak equivalence (respectively, fibration) 
if for each \w[,]{H\leq G} the map \w{f\rest{\bX\sp{H}}} is a weak equivalence 
(respectively, fibration). See \cite{DKanSR} and compare \cite{PiacH}.

The following result of Elmendorf explains the central role of fixed-point sets 
in Bredon equivariant homotopy theory:
\end{mysubsection}

\begin{thm}[\protect{\cite[Theorem 1]{ElmS}}]\label{telm}
The fixed point set functor sending a $G$-space $\bX$ to the diagram 
\w{\uX:\OG\op\to\TT} has a right adjoint \w[.]{C:\TOG\to\GT}
\end{thm}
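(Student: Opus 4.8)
The plan is to construct the right adjoint $C$ explicitly using the standard coend (or bar-construction) formula, and then verify the adjunction by a direct computation. First, I would recall that for a functor $\Psi \colon \OG\op \to \TT$, the associated $G$-space should be built so that its $H$-fixed points recover $\Psi(G/H)$ up to the combinatorics of the orbit category. The natural candidate is
$$
C(\Psi) \ :=\ \int^{G/K \in \OG} \map(G/K,\,-) \times \Psi(G/K),
$$
interpreted appropriately: more concretely, one takes the coend over $\OG$ of the bifunctor $(G/K, G/L) \mapsto \map_G(G/L, G/K) \times \Psi(G/K)$ evaluated suitably, or equivalently realizes the simplicial space $[n] \mapsto \coprod \Psi(G/K_0) \times \prod \map_{\OG}(\ldots)$. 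The key point is that $\Obj \OG$ is a set of $G$-spaces (the orbits $G/K$ themselves), so there is an evaluation $G$-space $\coprod_{K} \Psi(G/K) \times (G/K)$ with $G$ acting through the second factor, and $C(\Psi)$ is the quotient of this by the $\OG$-relations. This makes $C(\Psi)$ a genuine $G$-space.

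Next I would identify the fixed points. Because $(G/K)^H = \map_G(G/H, G/K)$ (the set of $G$-maps, with the subspace topology — a discrete space here), taking $H$-fixed points of the evaluation $G$-space and passing to the coend gives
$$
C(\Psi)^H \ \cong\ \int^{G/K}\map_{\OG}(G/H, G/K) \times \Psi(G/K)\ \cong\ \Psi(G/H),
$$
the last isomorphism being the co-Yoneda lemma for the functor $\Psi$ on $\OG\op$. This identification is natural in $G/H$, so $\underline{C(\Psi)} \cong \Psi$ as $\OG\op$-diagrams; in particular the fixed-point functor is (split) essentially surjective, which is half of what one wants. The verification that fixed points commute with the coend is where a little care is needed: one uses that taking $H$-fixed points is a right adjoint (to the functor sending a space to $G/H$ times it... actually to restriction along the inclusion of the orbit) hence need not commute with colimits in general, so instead I would argue pointwise, exploiting that each orbit $G/K$ is discrete and the coend here is in fact a reflexive coequalizer of coproducts of copies of the $\Psi(G/K)$, on which the fixed-point operation is transparent.

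Finally, for the adjunction itself, I would exhibit the unit and counit. The counit $\underline{C(\Psi)} \to \Psi$ is the natural isomorphism just produced (or its inverse, depending on variance); the unit $\bX \to C(\underline{\bX})$ sends $x \in \bX$ to the element of the coend represented by $(\mathrm{pr} \colon G/G_x \hookrightarrow \ldots,\ x \in \bX^{G_x})$ where $G_x$ is the isotropy group of $x$ — more robustly, it is the $G$-map classifying, for each orbit $G/K \to \bX$, the corresponding point of $\bX^K = \underline{\bX}(G/K)$. Then I would check the triangle identities, which reduce to the co-Yoneda computation above and the fact that a $G$-space is determined by its orbit maps. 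The natural bijection
$$
\Hom_{\GT}(\bX,\, C(\Psi)) \ \cong\ \Hom_{\TT^{\OG\op}}(\underline{\bX},\, \Psi)
$$
can alternatively be proved directly: a $G$-map $\bX \to C(\Psi)$ is, by the coend description and the fact that $\Obj\OG$ generates $\GT$ under colimits, the same as a compatible family of maps $\bX^K = \map_G(G/K,\bX)^{\ldots} \to \Psi(G/K)$, i.e.\ a natural transformation $\underline{\bX} \to \Psi$. I expect the main obstacle to be precisely the commutation of $H$-fixed points with the coend defining $C(\Psi)$ — making sure the topology (compactly generated, or whatever convention $\TT$ uses) behaves, and that the coequalizer presentation is legitimate; once that is in hand, everything else is formal and follows from co-Yoneda and the universal property of the orbit category. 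Since this is Elmendorf's theorem, I would cite \cite{ElmS} for the details and present only this outline.
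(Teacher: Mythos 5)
The paper itself offers no proof of this statement---it is quoted from Elmendorf---so your outline has to be measured against Elmendorf's actual argument, and it breaks down at exactly the point you flag as needing ``a little care.'' The coend you construct, the quotient of $\coprod_{K}\Psi(G/K)\times G/K$ by the $\OG$-relations, is the \emph{left} adjoint of the fixed-point functor $\bX\mapsto\uX$: the universal property of the coend gives $\Hom_{\GT}(C(\Psi),\bX)\cong\Hom_{\TOG}(\Psi,\uX)$, which is the reverse of the bijection you assert at the end. Indeed, since $G/K\cong\map_G(G/e,G/K)$, the very co-Yoneda lemma you invoke collapses your $C(\Psi)$ to $\Psi(G/e)$ (evaluation at the free orbit) with its induced $G$-action. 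Consequently the identification $C(\Psi)^H\cong\Psi(G/H)$ is false: taking $H$-fixed points does not commute with the quotient by the $\OG$-relations (for the same reason that $(\bX/G)^G$ differs from $\bX^G/G$), and in fact $C(\Psi)^H=\Psi(G/e)^H$, which is wrong already whenever $\Psi(G/e)$ is a point and $\Psi(G/H)$ is not. Discreteness of the orbits and a reflexive-coequalizer presentation do not rescue this, because the relations identify points lying over different orbits, and fixedness is not inherited by such quotients.

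The gap is not repairable within your strict framework. If there were a functor $C$ with natural isomorphisms $C(\Psi)^H\cong\Psi(G/H)$ (which a strict right adjoint would force, by Yoneda, since the fixed-point diagram of the orbit $G/H$ is the representable $\OG(-,G/H)$), then every $\OG\op$-diagram would be isomorphic to a fixed-point diagram; but in a fixed-point diagram all structure maps to $\Psi(G/e)$ are injective, so for instance $\Psi(G/e)=\ast$, $\Psi(G/G)=S^1$ is not one. Equivalently, the fixed-point functor does not preserve coequalizers, so it has no strict right adjoint in the naive sense---only the left adjoint you in effect built (evaluation at $G/e$). Elmendorf's $C$ is the homotopy-coherent thickening of your formula: the two-sided bar construction $B(\Psi,\OG,\iota)$, where $\iota:\OG\to\GT$ is the inclusion of orbits; its $H$-fixed points are $B(\Psi,\OG,\OG(G/H,-))$, which admit only a natural \emph{weak equivalence} comparison to $\Psi(G/H)$ (via the extra degeneracy), and the adjointness is to be understood homotopically, as in the Quillen-equivalence formulation recorded in Remark \ref{rgcw}. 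So either weaken everything accordingly---replace the strict coend by the bar construction, isomorphisms of fixed points by weak equivalences, and the adjunction by its derived version---or do as the paper does and simply cite Elmendorf.
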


\begin{remark}\label{rgcw}
In fact, this adjoint pair constitutes a simplicial Quillen equivalence
between \w{\GT} and \w[.]{\TOG} Moreover, for any $G$-space $\bX$, Elmendorf 
shows that \w{C\uX} is a $G$-CW complex. We therefore may (and shall) assume 
from now on that all our $G$-spaces are $G$-CW complexes.
\end{remark}

\begin{defn}\label{dborel}
A $G$-map \w{h:\bX\to\bY} which at the same time is a (non-equivariant)
homotopy equivalence will be called a \emph{Borel $G$-equivalence}. If
\w{x\sb{0}\in\bX} and \w{y\sb{0}=h(x\sb{0})\in\bY} are $G$-base-points (fixed 
under the $G$ action) and $h$ is a pointed homotopy equivalence, it will be called 
a \emph{pointed Borel $G$-equivalence}.
\end{defn}

\begin{lemma}\label{lgxequiv}
For any homotopy equivalence \w{h:\bX\to\bY} between CW complexes, there is a 
CW complex $\bZ$ with homotopy equivalences \w{i:\bX\to\bZ} and \w{i':\bY\to\bZ} 
such that \w[,]{i\sim h\circ i'} inducing strictly multiplicative monic homotopy 
equivalences \w{i\sb{\star}:\GX{\bX}\to\GX{\bZ}} and 
\w[.]{i'\sb{\star}:\GX{\bY}\to\GX{\bZ}}
\end{lemma}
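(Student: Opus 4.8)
The plan is to take $\bZ$ to be the mapping cylinder of $h$. By cellular approximation we may replace $h$ by a homotopic cellular map, and then
$$\bZ\ :=\ \bigl(\bX\times[0,1]\ \sqcup\ \bY\bigr)\ /\ \bigl((x,1)\sim h(x)\bigr)$$
is a CW complex containing $\bX=\bX\times\{0\}$ and $\bY$ as subcomplexes; write $i:\bX\hra\bZ$ and $i':\bY\hra\bZ$ for the two inclusions. As subcomplex inclusions, $i$ and $i'$ are cofibrations; $i'$ is the inclusion of the evident strong deformation retract of $\bZ$, and $i\simeq i'\circ h$ via the cylinder coordinate $x\mapsto(x,t)$, so $i$ is a homotopy equivalence because $h$ is. (This also supplies the relation $i\simeq i'\circ h$ required in the statement.) Since a map that is simultaneously a cofibration and a homotopy equivalence is the inclusion of a strong deformation retract, we obtain retractions $r:\bZ\to\bX$ and $r':\bZ\to\bY$ with
$$r\circ i=\Id_{\bX},\qquad r'\circ i'=\Id_{\bY},$$
together with homotopies $i\circ r\simeq\Id_{\bZ}$ and $i'\circ r'\simeq\Id_{\bZ}$.

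Next I would define $i_{\star}:\GX{\bX}\to\GX{\bZ}$ by $i_{\star}(f):=i\circ f\circ r$, and symmetrically $i'_{\star}(g):=i'\circ g\circ r'$. These are continuous (composition of maps being continuous in the compactly generated setting used throughout) and take values in $\GX{\bZ}$ since each factor is a homotopy equivalence. Strict multiplicativity is immediate from the strict splitting $r\circ i=\Id_{\bX}$:
$$i_{\star}(f_{1})\circ i_{\star}(f_{2})\ =\ i\circ f_{1}\circ(r\circ i)\circ f_{2}\circ r\ =\ i\circ(f_{1}\circ f_{2})\circ r\ =\ i_{\star}(f_{1}\circ f_{2}),$$
and likewise for $i'_{\star}$ using $r'\circ i'=\Id_{\bY}$. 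For monicity, observe that $r\circ i_{\star}(f)\circ i=f$, so $\phi\mapsto r\circ\phi\circ i$ is a left inverse of $i_{\star}$; similarly for $i'_{\star}$.

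It then remains to check that $i_{\star}$ — and, by the identical argument, $i'_{\star}$ — is a homotopy equivalence. Set $\rho:\GX{\bZ}\to\GX{\bX}$, $\rho(\phi):=r\circ\phi\circ i$; the identity above gives $\rho\circ i_{\star}=\Id$. For the other composite, fix a homotopy $H:\bZ\times[0,1]\to\bZ$ with $H_{0}=i\circ r$ and $H_{1}=\Id_{\bZ}$, and consider $\Theta:\GX{\bZ}\times[0,1]\to\map(\bZ,\bZ)$, $\Theta(\phi,t):=H_{t}\circ\phi\circ H_{t}$, which is continuous, equals $i_{\star}(\rho(\phi))$ at $t=0$ and $\phi$ at $t=1$. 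For each $\phi$ the path $t\mapsto\Theta(\phi,t)$ joins the two points $i_{\star}(\rho(\phi))$ and $\phi$ of $\GX{\bZ}$; since $\GX{\bZ}$ is a union of path components of $\map(\bZ,\bZ)$, this path lies entirely in $\GX{\bZ}$, so $\Theta$ restricts to a homotopy $i_{\star}\circ\rho\simeq\Id_{\GX{\bZ}}$. Hence $i_{\star}$ is a homotopy equivalence with homotopy inverse $\rho$.

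The main obstacle is really just the one non-formal input: that the retractions $r,r'$ can be chosen to split $i,i'$ \emph{strictly}, not merely up to homotopy — this is what makes $i_{\star},i'_{\star}$ honest semigroup homomorphisms and monic, and it rests on the standard but non-trivial fact that a cofibration which is a homotopy equivalence is the inclusion of a strong deformation retract. The remaining points are routine: point-set care (work with compactly generated spaces so that all the composition maps above are continuous), together with the observation that the homotopies used in the last paragraph automatically stay inside $\GX{\bZ}$, since the latter is a union of path components of the full mapping space.
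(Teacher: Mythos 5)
Your proof is correct and follows essentially the same route as the paper: the mapping cylinder is just an explicit model of the paper's factorization of $h$ into a cofibration followed by a (trivial) map, and both arguments then use the strict retractions \w{r\circ i=\Id\sb{\bX}} and \w{r'\circ i'=\Id\sb{\bY}} to define \w{i\sb{\star}} and \w{i'\sb{\star}} by conjugation, which makes them strictly multiplicative and monic with homotopy inverse given by the reverse conjugation. Your verification that \w{i\sb{\star}\circ\rho\simeq\Id} stays inside \w{\GX{\bZ}} merely spells out a detail the paper leaves implicit.
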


\begin{proof}
Factoring $h$ as \w{p'\circ i=h} with $i$ a cofibration and \w{p'} a fibration, 
and using the cofibrancy of $\bX$ and $\bY$, we obtain a diagram of 
homotopy equivalences
\mydiagram[\label{eqtriangle}]{
&& \bZ \ar[rrd]\sb{p'} \ar@/_{1.5pc}/[lld]\sb{p}&&\\
\bX \ar[rru]\sb{i} \ar[rrrr]\sp{h} &&&&
\bY \ar@/_{1.5pc}/[llu]\sb{i'}
}
\noindent with \w{p\circ i=\Id\sb{\bX}} and \w[.]{p'\circ i'=\Id\sb{\bY}}
Define \w{i\sb{\star}:\GX{\bX}\to\GX{\bZ}} by
\w{\varphi\mapsto i\circ\varphi\circ p} (for any homotopy
equivalence \w[).]{\varphi:\bX\to\bX} Because
\w[,]{p\circ i=\Id\sb{\bX}} the map \w{i\sb{\star}} is monic, preserves
compositions, and has a (non-monoidal) homotopy inverse
\w[.]{p\sb{\ast}:\GX{\bZ}\to\GX{\bX}} Similarly for \w[.]{i'}
\end{proof}

\begin{remark}\label{rgxequiv}
Any homotopy equivalence \w{h:\bX\to\bY} induces a homotopy equivalence
\w{\bB\GX{\bX}\simeq\bB\GX{\bY}} (cf.\ \cite[Satz 7.7]{MFucH}).
In fact, we can apply the classifying space functor $\bB$ to the maps 
\w{i\sb{\star}} and \w[,]{i'\sb{\star}} obtaining homotopy equivalences:
$$
\bB\GX{\bX}~\xra{\bB i\sb{\star}}~\bB\GX{\bZ}~
\xra{(\bB i'\sb{\star})\sp{-1}}~\bB\GX{\bY}~,
$$
\noindent whose composite is denoted by \w{\bB h\sb{\ast}} (well-defined up to 
homotopy). Similarly in the pointed case.
\end{remark}

%
%c2   Transferring group actions
%
\sect{Transferring group actions}
\label{ctga}

In this and the following section $G$ can be any topological group. Given a 
map \w[,]{f:\bX\to\bY} consider the questions of:

\begin{enumerate}
\renewcommand{\labelenumi}{$\bullet$ \ }
\item Transferring a given $G$-action on $\bX$ along $f$ to $\bY$. or conversely.
\item Making $f$ equivariant with respect to given actions on
both $\bX$ and $\bY$.
\end{enumerate}

In the spirit of \cite{DKanR}, we shall show how they can be reduced to suitable 
lifting problems. First, we make the questions more precise:

\begin{defn}\label{drelext}
Given any map \w[,]{f:\bX\to\bY} a $G$-map \w{f':\bX'\to\bY'} is:
\begin{enumerate}
\renewcommand{\labelenumi}{(\roman{enumi})\ }
\item a \emph{right transfer} of a $G$-action on $\bX$
\emph{along $f$} if we have a homotopy-commutative diagram
\mydiagram[\label{eqrtransf}]{
\bX \ar[rr]\sp{f} && \bY \ar[d]\sb{\simeq}\sp{h}\\
\bX' \ar[u]\sb{\simeq}\sp{k} \ar[rr]\sp{f'} && \bY'
}
\noindent in which $h$ is a  homotopy equivalence,
and $k$ is a Borel $G$-equivalence.
\item a \emph{left transfer} of a $G$-action on $\bY$
\emph{along $f$} if we have a diagram
\mydiagram[\label{eqltrans}]{
\bX \ar[rr]\sp{f} && \bY &&
\bY'' \ar[ll]\sp{\simeq}\sb{m} \ar[lld]\sb{\simeq}\sp{n} \\
\bX' \ar[u]\sb{\simeq}\sp{k}  \ar[rr]\sp{f'} && \bY' &&
}
\noindent in which $k$ is a  homotopy equivalence,
$m$ and $n$ are Borel $G$-equivalences, which becomes homotopy-commutative 
after inverting $m$ or $n$ (up to homotopy).
\item a \emph{compatible $G$-map} for $f$ \emph{with respect to $G$-actions} 
on $\bX$ and $\bY$ if we have a diagram
\mydiagram[\label{eqcompat}]{
\bX \ar[rr]\sp{f} && \bY &&
\bY'' \ar[ll]\sp{\simeq}\sb{m} \ar[lld]\sb{\simeq}\sp{n} \\
\bX' \ar[u]\sb{\simeq}\sp{k} \ar[rr]\sp{f'} && \bY' &&
}
\noindent in which $k$, $m$,  and $n$ are all Borel $G$-equivalences, which 
becomes homotopy-commutative after inverting $m$ or $n$.
\end{enumerate}
\end{defn}

In order to describe the conditions under which such transfers exist, we 
require also the following construction:

\begin{defn}\label{dspext}
For any map \w[,]{f:\bX\to\bY} let \w{\PXY{f}} denote the homotopy pullback:
\mydiagram[\label{eqspext}]{
\ar @{} [drr] |<<<<<{\framebox{\scriptsize{PB}}}
\PXY{f} \ar@{->>}[d]\sp{\delta} \ar[rr]\sp{\vre} &&
\GX{\bY} \ar@{->>}[d]\sp{f\sp{\ast}}\\
\GX{\bX} \ar[rr]\sb{f\sb{\ast}} && \map(\bX,\bY)~.
}
\noindent This can be constructed explicitly in two ways: if we change $f$ 
into a cofibration, the map \w{f\sp{\ast}:\map(\bY,\bY)\to\map(\bX,\bY)} is 
a fibration, so its restriction to \w{\GX{\bY}} is a fibration, too (since the 
latter is just a union of path components of \w[).]{\map(\bY,\bY)} In this case, 
the strict pullback is actually the homotopy pullback. Similarly when $f$ is 
a fibration, so \w{f\sb{\ast}} is a fibration.

Using such a strict model, we see that \w{\PXY{f}} is a sub-monoid of the 
strictly associative monoid \w[.]{\GX{\bX}\times\GX{\bY}} Moreover, it is 
grouplike, since \w{(g,h)\in\PXY{f}} means that \w{f\circ g=h\circ f} (for
\w{g\in\GX{\bX}} and \w[),]{h\in\GX{\bY}} and thus
\w[.]{f\circ g\sp{-1}\sim h\sp{-1}\circ f} If $f$ is either a fibration or a 
cofibration, we can use \cite[Lemma 4.16]{BJTurR} to change \w{h\sp{-1}} 
(respectively,  \w[)]{g\sp{-1}} up to homotopy to get 
\w{f\circ g\sp{-1}=h\sp{-1}\circ f} and thus \w[,]{(g\sp{-1},h\sp{-1})\in\PXY{f}} 
too. The maps $\delta$ and $\vre$ (the restrictions of the
structure maps for \w[)]{\tPXY{f}} are monoid maps. Evidently \w{\PXY{f}} is 
a homotopy invariant of $f$.
\end{defn}

With these notions we then have the following:

\begin{prop}\label{prelext}
Let \w{f:\bX\to\bY} be any map in $\TT$.

\begin{enumerate}
\renewcommand{\labelenumi}{(\roman{enumi})\ }
\item There is a right transfer of a $G$-action on $\bX$ (with monoid action 
map \w[)]{\zeta\sb{\bX}:G\to\GX{\bX}} along $f$ if and only if there is a map 
$\Psi$ making the following diagram commute up to homotopy:
\mydiagram[\label{eqrighttr}]{
&&\bB\PXY{f} \ar[d]\sp{\bB\delta} \\
\BG \ar[rru]\sp{\Psi} \ar[rr]\sb{\bB\zeta\sb{\bX}} && \bB\GX{\bX} .
}
\item There is a left transfer of a $G$-action on $\bY$ (with monoid action map 
\w[)]{\zeta\sb{\bY}:G\to\GX{\bY}} along $f$ if and only if there is a map $\Psi$ 
making the following diagram commute up to homotopy:
\mydiagram[\label{eqlefttr}]{
&&\bB\PXY{f} \ar[d]\sp{\bB\vre} \\
\BG \ar[rru]\sp{\Psi} \ar[rr]\sb{\bB\zeta\sb{\bY}} && \bB\GX{\bY} .
}
\item There is a compatible $G$-map for $f$ with respect to
$G$-actions on $\bX$ and $\bY$ (with monoid action maps 
\w{\zeta\sb{\bX}:G\to\GX{\bX}}  and \w[)]{\zeta\sb{\bY}:G\to\GX{\bY}} if and only 
if there is a map $\Psi$ making the following diagram commute up to homotopy:
\mydiagram[\label{eqcompatible}]{
&&& \bB\PXY{f} \ar[d]\sp{(\bB\delta,\bB\vre)} \\
\BG \ar[rrru]\sp{\Psi} \ar[rrr]\sb{(\bB\zeta\sb{\bX},\bB\zeta\sb{\bY})} &&& 
\bB\GX{\bX}\times\bB\GX{\bY} .
}
\end{enumerate}
\end{prop}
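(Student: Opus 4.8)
The plan is to treat the three parts uniformly, since they differ only in which projection of the (grouplike, strictly associative) monoid \w{\PXY{f}} one remembers. Recall from Definition \ref{dspext} that, after replacing $f$ by a fibration or cofibration, \w{\PXY{f}} is a sub-monoid of \w[,]{\GX{\bX}\times\GX{\bY}} consisting of the pairs \w{(g,h)} with \w{h\circ f=f\circ g} strictly; the maps $\delta$, $\vre$ are the two coordinate projections. Applying the classifying space functor $\bB$, we get a homotopy pullback square
\mydiagram[\label{eqbspb}]{
\bB\PXY{f} \ar[d]\sb{\bB\delta} \ar[rr]\sp{\bB\vre} &&
\bB\GX{\bY} \ar[d]\sp{\bB f\sp{\ast}}\\
\bB\GX{\bX} \ar[rr]\sb{\bB f\sb{\ast}} && \bB\map(\bX,\bY)~,
}
since $\bB$ preserves homotopy pullbacks of grouplike monoids here (or simply note that \w{\bB\delta} is the fibration with fiber the relevant path component of \w[,]{\map(\bX,\bY)} as in the fibration sequence \wref{eqhfibseq} applied to the evaluation at $f$). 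The point of this square is that a map \w{\Psi:\BG\to\bB\PXY{f}} is, up to homotopy, exactly a pair of maps into \w{\bB\GX{\bX}} and \w{\bB\GX{\bY}} together with a chosen homotopy between their composites down to \w[.]{\bB\map(\bX,\bY)}

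\textbf{From a lift to a transfer.}
Given a lift $\Psi$ in \wref[,]{eqrighttr} the composite \w{\bB\vre\circ\Psi:\BG\to\bB\GX{\bY}} is a homotopy action of $G$ on $\bY$, which by the realizability statement of \S\ref{shaction} (the ``proxy action'' construction of diagram \wref[)]{eqrowcol} is realized by a genuine $G$-space \w{\bY'} with a Borel $G$-equivalence \w[.]{h:\bY'\xra{\simeq}\bY} Likewise \w{\bB\zeta\sb{\bX}=\bB\delta\circ\Psi} up to homotopy says the given action on $\bX$ is (Borel) the one underlying $\Psi$, so set \w{\bX'=\bX} with its given action and \w[.]{k=\Id} The commuting homotopy in \wref[,]{eqbspb} transported through these realizations, produces a map \w{f':\bX'\to\bY'} with \w{h\circ f'\simeq f\circ k} \wwh i.e.\ a homotopy-commutative square \wref[,]{eqrtransf} with $f'$ equivariant; here one uses that $\bY'$ is a $G$-CW complex, so the homotopy class of $f'$ is realized by an honest $G$-map. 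Part (ii) is symmetric with the roles of $\bX$ and $\bY$ exchanged (now realize \w{\bB\delta\circ\Psi} on the $\bX$ side, producing \w{\bX'} and the zig-zag \w{\bY''\to\bY} of \wref[),]{eqltrans} and part (iii) requires realizing \emph{both} coordinates, so both $k$ and the zig-zag at $\bY$ are Borel equivalences, matching \wref[.]{eqcompat}

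\textbf{From a transfer to a lift.}
Conversely, given a right transfer \wref[,]{eqrtransf} the $G$-spaces \w{\bX'} and \w{\bY'} have monoid action maps, and using Lemma \ref{lgxequiv} (and Remark \ref{rgxequiv}) we may replace \w{\GX{\bX}} and \w{\GX{\bX'}} by a common model \w{\GX{\bZ}} compatibly, and similarly for $\bY$; the homotopy-commuting square \wref{eqrtransf} then makes the pair of monoid action maps \w{(\bB\zeta\sb{\bX'},\bB\zeta\sb{\bY'})} land, up to the specified homotopy, in the homotopy pullback \w[,]{\bB\PXY{f}} yielding the desired $\Psi$; its composite with \w{\bB\delta} is \w{\bB\zeta\sb{\bX}} by construction of the square. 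Parts (ii) and (iii) are handled the same way, reading off whichever coordinate(s) the definition of left transfer / compatible map guarantees to be Borel equivalences.

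\textbf{Main obstacle.}
The routine bookkeeping (changing $f$ to a (co)fibration, the $G$-CW cofibrancy needed to rectify a homotopy class of maps to a $G$-map) is harmless. The one delicate point, which deserves care, is the compatibility of the \emph{monoidal} identifications: passing from \w{\GX{\bX}} to \w{\GX{\bX'}} via Lemma \ref{lgxequiv} gives a \emph{strictly} multiplicative comparison only after introducing the auxiliary space $\bZ$, and one must check that the square \wref{eqrtransf} still commutes (up to homotopy) \emph{inside} \w{\bB\map(\bX,\bY)} after these substitutions, so that the induced map to the homotopy pullback is well defined. This is where the explicit strict models for \w{\PXY{f}} in Definition \ref{dspext} \wwh and the remark there that \w{\PXY{f}} is a homotopy invariant of $f$ \wwh do the real work; granting that, the equivalence is a formal consequence of \wref{eqbspb} being a homotopy pullback.
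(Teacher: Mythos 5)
Your argument hinges on the claim that applying $\bB$ to the square \wref{eqspext} produces a homotopy pullback square over ``$\bB\map(\bX,\bY)$'', so that a map $\BG\to\bB\PXY{f}$ is, up to homotopy, nothing more than a pair of maps to $\bB\GX{\bX}$ and $\bB\GX{\bY}$ together with a homotopy between their images downstairs. This is where the proposal breaks: $\map(\bX,\bY)$ is not a monoid, so the fourth corner of your square is not even defined, and more fundamentally the classifying space functor does not commute with homotopy pullbacks of grouplike monoids \wwh the fiber sequence shifts by one. Test it on $\bX=\ast$ with $f$ the inclusion of a basepoint $y_{0}$ into a connected $\bY$: then $\PXY{f}\simeq\GXp{\Ys}$, and by \wref{eqhfibseq} the homotopy fiber of $\bB j\colon\bB\GXp{\Ys}\to\bB\GX{\bY}$ is $\bY$ itself, so your pullback square would force $\bY$ to be a loop space of the missing corner; for $\bY=\bS{2}$ (not an H-space) no such delooping exists. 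Hence the asserted universal property of $\bB\PXY{f}$ is false, and both directions of your proof appeal to it; it is also why the lifting problems \wref{eqrighttr}--\wref{eqcompatible} carry a genuine tower of obstructions (Proposition \ref{pinterpob}) rather than the single comparison your square would yield.

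The second gap is in the passage from a lift to a transfer: you realize the two coordinate homotopy actions separately and then assert that ``the homotopy class of $f'$ is realized by an honest $G$-map since $\bY'$ is a $G$-CW complex''. Equivariance is not a property of the nonequivariant homotopy class of a map between $G$-CW complexes \wwh making a given map equivariant is exactly the nontrivial problem addressed by part (iii), which would be vacuous if this assertion held. What the lift $\Psi$ must provide is one coherent structure on the pair, and this is how the paper argues: it rectifies $\Psi$ to a strict homomorphism of topological monoids $\hrho\colon\hG\to\hPXY{f}$ (cofibrant replacement in strictly associative monoids, Schw\"{a}nzl--Vogt, plus the rectification lemma of \cite{BJTurR}), identifies $\hPXY{f}$ with $\PXY{\hf}$ for a cofibration $\hf\colon\hX\to\hY$, and uses that points of $\PXY{\hf}$ are pairs $(g,h)$ with $\hf\circ g=h\circ\hf$ \emph{strictly}, so that $\hX$ and $\hY$ acquire strict $\hG$-actions making $\hf$ strictly equivariant; only then does it pass to classifying maps of the associated principal bundles (via Prezma's classification) to translate back to $G$ and obtain \wref[.]{eqrtransf} Note also that the easy direction does not need your pullback claim at all: after changing $f'$ into a $G$-cofibration, the pair $(\zeta\sb{\bX'},\zeta\sb{\bY'})$ is literally a monoid map $G\to\PXY{f'}\simeq\PXY{f}$, and one simply applies $\bB$.
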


\begin{proof}
\noindent (i)\ If $\bX$ is a $G$-space, and we have a right transfer
\w[,]{f':\bX'\to\bY'}  of the $G$-action along $f$, we may change \w{f'} into a
$G$-cofibration, and the monoid action maps
\w{\zeta\sb{\bX'}:G\to\GX{\bX'}} and \w{\zeta\sb{\bY'}:G\to\GX{\bY'}} then fit
together to define a monoid map \w{z:G\to\tPXY{f'}} in
\wref[,]{eqspext} which actually lands in \w[,]{\PXY{f'}}
since $G$ is a group. Because \w{\bB\zeta\sb{\bX}} is just
\w[,]{\bB\zeta\sb{\bX'}} up to homotopy, \w{\bB z:\BG\to\bB\PXY{f'}} is the required
lift in \wref[.]{eqrighttr}

Conversely, given a lift $\Psi$ in \wref[,]{eqrighttr} by applying Kan's 
$G$-functor to \wref[,]{eqrighttr} realizing, and then taking cofibrant replacement 
in the model category of strictly associative topological monoids (see 
\cite[Theorem B]{SVogCM}), we obtain a diagram of cofibrant (and grouplike) 
topological monoids:
\mydiagram[\label{eqstrictgp}]{
&& \hPXY{f} \ar[d]\sp{\hdel} \ar[rrd]\sp{\hvr} &&\\
\hG  \ar[rru]\sp{\hrho} && \hGX{\bX} && \hGX{\bY}~,
}
\noindent with \w{\hPXY{f}} weakly equivalent to \w[.]{\PXY{f}}

Since \w[,]{\bB\GX{\hX}\simeq\bB\GX{\bX}} by pulling back \wref{eqhfibseq} we 
obtain a monoid action of $\hG$ on \w[.]{\hX\simeq\bX}
By \cite[Theorem 5.8]{PrezH} (see also \cite{DLashP,MFucD}, 
\cite[\S 7,9]{MayCF},
\cite[\S 5]{BootEH} and \cite{DDKaE}), this is classified by a map
\w[.]{\hat{\theta}:E\sb{\hat{\theta}}\to\bB\hG} Up to homotopy, $\hat{\theta}$ 
corresponds to the map $\theta$ in the fibre bundle sequence:
\begin{myeq}[\label{eqclmap}]
\bX':~=\EG\times\bX~\xra{}~E\sb{\theta}~\xra{\theta}~\BG
\end{myeq}
\noindent classifying the free $G$-action on \w{\bX'} (Borel equivalent to the 
given $\bX$).  Similarly, we get a free $\hG$-action on \w[,]{\hY\simeq\bY} 
classified by \w{\hat{\kappa}:E\sb{\hat{\kappa}}\to\bB\hG} Moreover, we have a map
\w{\hf:\hX\to\hY} (which is just \w[,]{f:X\to Y} up to homotopy), and
we may assume that $\hf$ is itself a cofibration (for example, by carrying out 
the above construction in simplicial sets, and replacing $\hY$ by \w{\hY\times\bC\hX}
before realizing).

As in \wref[,]{eqspext} we obtain a commuting diagram
\mydiagram[\label{eqhsquare}]{
\PXY{\hf} \ar[d]\sb{\hdel'} \ar[rr]\sp{\hvr'} &&
\GX{\hY}  \ar@{->>}[d]\sp{\hf\sp{\ast}}\\
\GX{\hX} \ar[rr]\sb{\hf\sb{\ast}} && \map(\hX,\hY)~,
}
\noindent in which \w{\hf\sp{\ast}} is a fibration, so \w{\hdel'} is,
too.

Because \w[,]{\hX\simeq\bX} \w{\hGX{\bX}} and \w{\GX{\bX}} are weakly equivalent, 
and since the former is cofibrant and the latter is fibrant, we have a weak 
equivalence of monoids \w[,]{k:\hGX{\bX}\to\GX{\bX}} and similarly 
\w[.]{\ell:\hGX{\bY}\simeq\GX{\bY}} Moreover, since \wref{eqhsquare}
is a homotopy pullback, \w{\PXY{f}} and \w{\PXY{\hf}} are weakly equivalent, 
and again we have a weak equivalence of monoids 
\w[.]{h:\hPXY{f}\xra{\simeq}\PXY{\hf}} Thus the strict diagram \wref{eqhsquare} 
fits into a homotopy commutative diagram:
\mydiagram[\label{eqhext}]{
\ar @{} [ddrr] |<<<<<<<<<<<<<<<<<<{\framebox{\scriptsize{A}}}
\ar @{} [drrrr] |<<<<<<<<<<<<<<<<<<<<<{\framebox{\scriptsize{B}}}
\hPXY{f}\ \ar[d]\sb{\hdel} \ar[rrd]\sp{h} \ar[rr]\sp{\hvr} &&
\hGX{\bY} \ar[rrd]\sp{\ell} && \\
\hGX{\bX} \ar[rrd]\sb{k} &&
\PXY{\hf}\ \ar[d]\sp{\hdel'} \ar[rr]\sp{\hvr'} &&
\GX{\hY}  \ar@{->>}[d]\sp{\hf\sp{\ast}}\\
&&\GX{\hX} \ar[rr]\sb{\hf\sb{\ast}} && \map(\hX,\hY)~.
}
\noindent In the model category of strictly associative monoids, we can replace 
$h$ by another weak equivalence of monoids  making \w{\framebox{\scriptsize{A}}}
commute on the nose (cf.\ \cite[Lemma 4.16]{BJTurR}), and then changing $\ell$ 
into a fibration, we may replace $\hvr$ by a map making 
\w{\framebox{\scriptsize{B}}} commute strictly, too, without changing \w[.]{\hPXY{f}}

Composing the monoid map \w{\hrho:\hG\to\hPXY{f}} of \wref{eqstrictgp} with 
\w{k\circ\hdel:\hPXY{f}\to\GX{\hX}}
and \w[,]{\ell\circ\hvr:\hPXY{f}\to\GX{\hY}} we obtain monoid
action maps \w{\hz\sb{\hX}:\hG\to\GX{\hX}} and
\w{\hz\sb{\hY}:\hG\to\GX{\hY}} making $\hX$ and $\hY$ into strict
$\hG$-spaces, with \w{\hf:\hX\hra\hY} a $\hG$-map (which is a
cofibration). It therefore fits into a commuting diagram of principal
$\hG$-bundles
\mydiagram[\label{eqhbundle}]{
\hX\ar[d] \ar@{^{(}->}[rr]\sp{\hf} && \hY \ar[d] \\
E\sb{\hat{\theta}} \ar[rd]\sb{\hat{\theta}} \ar[rr]\sp{\hat{\beta}} &&
E\sb{\hat{\kappa}} \ar[ld]\sp{\hat{\kappa}} \\
 & \bB\hG~. &
}
\noindent Here \w{\hat{\beta}} is obtained by realizing the bar construction 
for the $\hG$-actions on $\hX$ and $\hY$, respectively (see \cite[\S 5]{PrezH}),
so it commutes up to homotopy with the classifying maps $\hat{\theta}$ and 
$\hat{\kappa}$ for the two bundles, where (as noted above) up to homotopy 
$\hat{\theta}$ is just \w[,]{\theta:E\sb{\theta}\to\BG} classifying the free 
$G$-space \w[.]{\bX'}

Let \w{\kappa:E\sb{\hat{\kappa}}\to\BG} denote the composite of \w{\hat{\kappa}} 
with \w[,]{\bB\hG\simeq\BG} classifying a free $G$-bundle
\w{\bY'\to E\sb{\hat{\kappa}}} with \w[.]{\bY'\simeq\hY\simeq\bY}
If we also let \w{\beta:E\sb{\theta}\to E\sb{\hat{\kappa}}} denote the composite 
of \w{\hat{\beta}} with \w[,]{E\sb{\theta}\simeq E\sb{\hat{\theta}}} then 
\w[,]{\kappa\circ\beta\simeq\theta} so we have a map
\mydiagram[\label{eqoldbundle}]{
\bX' \ar[d] \ar[rr]\sp{f'} && \bY' \ar[d] \\
E\sb{\theta} \ar[rd]\sb{\theta} \ar[rr]\sp{\beta} &&
E\sb{\hat{\kappa}} \ar[ld]\sp{\kappa} \\
 & \BG &
}
\noindent of principal $G$-bundles, so in particular, \w{f'} is a $G$-map\vsm.

\noindent Statements (ii) and (iii) are proven analogously.
\end{proof}

Compare \cite[Proposition 2.2]{ZabGC} for the compatibility version for 
homotopy actions.

\begin{prop}\label{prlex}
Let \w{f:\bX\to\bY} be any map. In a right transfer of a $G$-action on $\bX$ 
along $f$, we may assume that $k$ in \wref{eqrtransf} is a homeomorphism; 
in a left transfer of a $G$-action on $\bY$ along $f$, we may assume that 
$m$ and $n$ in \wref{eqltrans} are homeomorphisms; and
in a compatible $G$-map for $f$ with respect to $G$-actions on $\bX$ and
$\bY$, we may assume that either $k$, or $m$ and $n$, are homeomorphisms 
in \wref[.]{eqcompat}
\end{prop}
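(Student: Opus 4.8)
The plan is to observe that Proposition \ref{prelext} already furnishes, in each case, a transfer in which \emph{all} the relevant maps are \emph{fibre bundle maps of principal $G$-bundles}, and to argue that such bundle maps can be arranged to be actual homeomorphisms where claimed. Concretely, tracing through the construction in the proof of Proposition \ref{prelext}(i): the right transfer produced there is the map \w{f':\bX'\to\bY'} in the diagram \wref[,]{eqoldbundle} where \w{\bX'=\EG\times\bX} is the Borel construction and \w{k:\bX'\to\bX} is (up to homotopy) the canonical projection \w[.]{\EG\times\bX\to\bX} This $k$ is not literally a homeomorphism, but it is a $G$-equivariant Hurewicz fibration which is also a (non-equivariant) homotopy equivalence, i.e.\ a Borel $G$-equivalence with contractible fibres. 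The point is that the \emph{datum} of a right transfer (Definition \ref{drelext}(i)) only requires the homotopy-commuting square \wref[,]{eqrtransf} so we are free to replace the pair \w{(\bX',f')} by any $G$-homotopy-equivalent pair; in particular we may pull everything back so that \w{\bX'=\bX} on the nose.

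First I would make this replacement precise. Given the transfer \w{f':\bX'\to\bY'} with \w{k:\bX'\xra{\simeq}\bX} a Borel $G$-equivalence, factor $k$ (non-equivariantly) as a cofibration followed by a trivial fibration, or more simply: since $k$ is a $G$-map which is a homotopy equivalence, use the mapping cylinder $M_k$ of $k$, which carries a $G$-action (the $G$-action on $\bX$ extends over the cylinder because the homotopy can be chosen $G$-equivariantly on the free $G$-space \w[)]{\bX'} and the inclusion \w{\bX\hra M_k} is a $G$-cofibration and a homotopy equivalence, while the projection \w{M_k\to\bX} is a $G$-homotopy equivalence — no, the cleanest route is: \emph{pull back}. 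Set \w{\bY'':=\bY'\times\sb{\bX}\bX'}? That is not quite it either. The correct and simplest device: replace \w{\bX'} by \w{\bX} itself by transporting the $G$-action across $k$. Since $k$ is a homotopy equivalence between CW complexes, Lemma \ref{lgxequiv} gives a CW complex $\bZ$ with monic multiplicative homotopy equivalences \w{\GX{\bX'}\hra\GX{\bZ}} and \w[;]{\GX{\bX}\hra\GX{\bZ}} the composite of the $G$-action map \w{G\to\GX{\bX'}} into \w{\GX{\bZ}} makes $\bZ$ a $G$-space, and now both \w{\bX\hra\bZ} and \w{\bX'\hra\bZ} are Borel $G$-equivalences, so we may as well have taken \w{\bX'=\bX} from the start (absorbing the discrepancy into $h$ and the other maps, using Remark \ref{rgxequiv} to see the homotopy class \w{\bB\zeta\sb{\bX}} is unchanged). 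This is exactly the manoeuvre already used inside the proof of Proposition \ref{prelext} to pass between \w{\GX{\hX}} and \w[,]{\GX{\bX}} so it reproves the present statement with no new input.

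The remaining assertions are handled identically, using the symmetry of the constructions: in a left transfer the two Borel $G$-equivalences $m$ and $n$ out of \w{\bY''} are of the same nature (projections off Borel constructions, or trivial fibrations), and the same $\bZ$-trick lets one collapse \w{\bY''} onto \w{\bY} or onto \w[;]{\bY'} in the compatible-map case one applies it to \emph{one} of the two ends, either the $\bX$-side (making $k$ a homeomorphism) or the $\bY$-side (making $m,n$ homeomorphisms), but not both simultaneously, which is why the statement offers the alternative. The one step to be careful about — the main obstacle — is checking that replacing a space by a homeomorphic (indeed equal) one throughout a homotopy-commutative diagram does not disturb the \emph{homotopy-commutativity after inverting $m$ or $n$} clause in Definition \ref{drelext}(ii)--(iii); this is routine but must be spelled out, since the homotopies witnessing commutativity have to be transported along the chosen homotopy inverses of $k$ (resp.\ $m,n$), and one uses that these inverses can be taken $G$-equivariant on the free $G$-spaces involved, exactly as in the passage from \wref{eqhext} to \wref{eqoldbundle} above.
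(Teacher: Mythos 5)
There is a genuine gap, and it sits exactly where the paper's proof does its real work. Your central device \wh ``transporting the $G$-action across $k$'' via Lemma \ref{lgxequiv} \wh does not produce a $G$-action at all: the map \w{i\sb{\star}:\GX{\bX'}\to\GX{\bZ}} of that lemma, \w[,]{\varphi\mapsto i\circ\varphi\circ p} is strictly multiplicative but \emph{not unital} (it sends \w{\Id\sb{\bX'}} to \w[,]{i\circ p\neq\Id\sb{\bZ}} and its image consists of self-homotopy equivalences of $\bZ$ that are not homeomorphisms), so the composite \w{G\to\AX{\bX'}\to\GX{\bZ}} is not a homomorphism landing in \w{\AX{\bZ}} and does not make $\bZ$ a $G$-space. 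Lemma \ref{lgxequiv} and Remark \ref{rgxequiv} only give comparisons of monoids and of classifying spaces, i.e.\ exactly the homotopy-level data that Proposition \ref{prelext} already uses; they cannot by themselves convert a homotopy-level identification into a strict action or a strict $G$-map. Moreover, even granting some action on $\bZ$, the conclusion you need is a $G$-map defined on $\bX$ itself, equivariant for the \emph{given} action on $\bX$, so that $k$ can be taken to be a homeomorphism; the phrase ``we may as well have taken \w{\bX'=\bX} from the start'' is the statement to be proved, not an argument, and your sketch (including the abandoned mapping-cylinder and pullback attempts) never constructs such a map.

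The paper's proof is different and more concrete. For the $\bX$-side it uses that Proposition \ref{prelext} can be arranged to output a $G$-\emph{cofibration} \w{f':\bX'\hra\bY'} with \w{\bX'=\bX\times\EG} free, and then forms the $G$-pushout \wref{eqgpushout} along the $G$-projection \w[:]{q:\bX\times\EG\to\bX} all maps there are $G$-maps, \w{f''} is again a cofibration, the square is a homotopy pushout in $\TT$, and since $q$ is a homotopy equivalence so is \w[;]{r:\bY'\to\bY''} hence \w{f'':\bX\hra\bY''} is a right transfer with $k$ the identity. For the $m,n$-side it argues differently again: when the action on the relevant space is free, the Borel-construction replacement is unnecessary and the classifying-map argument lands on $\bY$ itself; the general case reduces to this by composing with the Borel $G$-equivalence from the free $G$-space \w{\bY\times\EG} to $\bY$. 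Neither the pushout step nor the freeness reduction appears in your proposal, so as written it does not establish the proposition.
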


\begin{proof}
If the action of $G$ on $\bX$ is \emph{free} (and \wref{eqrighttr} holds),
we can replace \w{\bX':=\EG\times\bX} by $\bX$ in \wref[,]{eqclmap} and therefore 
also in \wref[,]{eqoldbundle}
so we have a right transfer \w{f':\bX\hra\bY'} (along \w[)]{f:\bX\to\bY} which 
is a $G$-map.

The same argument shows that if the action of $G$ on $\bY$ is free (and 
\wref{eqlefttr} holds), it has a left transfer along $f$ to a fibration 
\w{f':\bX'\epic\bY} which is a $G$-map. Moreover, given free $G$-actions on 
both $\bX$ and $\bY$, any \w{f:\bX\to\bY} has a compatible $G$-map 
\w{f':\bX\to\bY} with the same source and target (if \wref{eqcompatible} holds).

Since every $G$-space $\bY$ has a Borel $G$-equivalence \w[,]{h:\bY'\to\bY} where 
\w{\bY'} is a free $G$-space, we do not actually need to assume that the action 
on $\bY$ is free, because we can compose the left transfer or compatible map 
\w{f'} with this $h$. Thus we may always assume that $\ell$, $m$, and $n$ are 
homeomorphisms.

Finally, given a $G$-space $\bX$, we may replace it by the free $G$-space
\w{\bX':=\bX\times\EG} and produce a $G$-map \w{f':\bX'\hra\bY'} which is
a cofibration. Then taking the pushout:
\mydiagram[\label{eqgpushout}]{
\ar @{} [drr]|<<<<<<<<<<<<<<{\framebox{\scriptsize{PO}}}
\bX' \ar[d]\sp{q}\sb{\simeq} \ar@{^{(}->}[rr]\sp{f'} && \bY'\ar[d]\sp{r}\sb{\simeq} \\
\bX \ar@{^{(}->}[rr]\sb{f''} && \bY'',
}
\noindent we see that all maps are $G$-maps; \w[,]{f'} and thus \w[,]{f''} are 
cofibrations, so this is a homotopy pushout in $\TT$, and since
$q$ is a homotopy equivalences, so is $r$.
\end{proof}

\begin{mysubsection}{Applications}\label{sapplic}
In general, the lifting problems of Proposition \ref{prelext} are hard to solve. 
However, in certain cases the obstructions to obtain the relevant liftings may 
be computable, or may vanish for dimension reasons. For example:

\begin{enumerate}
\renewcommand{\labelenumi}{(\roman{enumi})\ }
\item When \w{\bX=K(\pi,n)} is an Eilenberg-Mac~Lane space, then:
\begin{myeq}[\label{eqseem}]
\GX{\bX}~\simeq~K(\pi, n)\times\Aut(\pi)
\end{myeq}
\noindent (as a monoid) is a semi-direct product of the Eilenberg-Mac~Lane space 
itself and the discrete group \w[,]{\Aut(\pi)}
while \w{\GXp{X}\simeq \Aut(\pi)\cong\pi\sb{0}\GXp{X}} is homotopically discrete 
(see \cite[Proposition 25.2]{MayS}).

Thus if both $\bX$ and $\bY$ are Eilenberg-Mac~Lane spaces, all but the pullback 
itself in \wref{eqspext} are generalized Eilenberg-Mac~Lane spaces, so each of 
the lifting problems \wref[,]{eqrighttr} \wref[,]{eqlefttr} and \wref{eqcompatible} 
reduces to an algebraic question about certain classes in the cohomology of 
\w{\BG} (as expected).
\item A more interesting example is when each of $\bX$ and $\bY$ has only two 
non-trivial homotopy groups (see \S \ref{eginterp} below). In this case the 
homotopy groups of \w{\GX{\bX}} and \w{\GX{\bY}} are completely known by 
\cite[\S 3]{DidHH} (see also \cite{MolS}), and in particular
if \w{\pi\sb{i}\bX=0} for \w{i\neq k,m} \wb[,]{k<m} then \w{\pi\sb{i}\GX{\bX}=0} 
unless \w[.]{k\leq i\leq m} Using the Postnikov tower for $\bY$ we can also 
determine \w[,]{\pi\sb{\ast}\map(\bX,\bY)} up to extension. Therefore the homotopy 
groups of \w{\PXY{f}} may also be determined, up to extension.

Since we need not assume $G$ is finite, \w{H\sp{i}(\BG;\pi)} may vanish for large
enough $i$, at least when $\pi$ is one of the groups \w{\pi\sb{\ast}\PXY{f}} 
Thus in certain cases we can show that there is no obstruction to solving the 
lifting problems.
\item The case when $\bX$ and $\bY$ are spheres has been the subject of intense 
study over the years, beginning with \cite{PSmiF}.  Moreover, much is known about
\w{\GX{\bS{n}}} and \w{\GXp{\bS{n}}} (see, e.g., \cite{HansHC,HansHG}). Therefore, 
one might be able to compute obstructions to extending or lifting certain group 
actions on spheres along some map \w[,]{f:\bS{n}\to\bS{m}} or making $f$ equivariant.
\end{enumerate}
\end{mysubsection}

%
%c3   Interpolating group actions
%
\sect{Interpolating group actions}
\label{ciga}

For our approach to the realization problem for Bredon homotopy actions, we need 
to consider a slightly more complicated situation than that studied in the previous 
section: assume given a sequence of maps
\begin{myeq}[\label{eqinterp}]
\bX~\xra{f}~\bY~\xra{g}~\bZ
\end{myeq}
\noindent with given $G$-actions on $\bX$ and $\bZ$, for which we want to find a 
compatible $G$-action on $\bY$.

\begin{defn}\label{dinterp}
A \emph{$G$-interpolation} for two $G$-spaces $\bX$ and $\bZ$ and maps as in 
\wref{eqinterp} is a pair of $G$-maps \w{f':\bX'\to\bY'} and \w{g':\bY'\to\bZ'} 
fitting into a diagram
\mydiagram[\label{eqinterpol}]{
&& \bX \ar[rr]\sp{f} && \bY \ar[d]\sp{h} \ar[rr]\sp{g} && \bZ &&
\bZ'' \ar[ll]\sp{\simeq}\sb{m} \ar[lld]\sb{\simeq}\sp{n} \\
\bX'' \ar[rru]\sb{\simeq}\sp{k} \ar[rr]\sp{\simeq}\sb{\ell} && 
\bX' \ar[rr]\sp{f'} && \bY' \ar[rr]\sp{g'} && \bZ' &&
}
\noindent in which $k$, $\ell$, $m$,  and $n$ are all homotopy equivalences and 
$G$-maps, and $h$ is a homotopy equivalence, which becomes homotopy-commutative 
after inverting $m$ or $n$ (up to homotopy).
\end{defn}

\begin{defn}\label{dspinterpol}
Given two composable maps as in \wref[,]{eqinterp} let \w{\QXY{f}{g}} denote 
the homotopy pullback in:
\mydiagram[\label{eqspinterpol}]{
\ar @{} [drr] |<<<<{\framebox{\scriptsize{PB}}}
\QXY{f}{g} \ar[d]\sb{\mu\sb{f}} \ar[rr]\sp{\nu\sb{g}} &&
\PXY{g} \ar@{->>}[d]\sp{\delta\sb{g}}\\
\PXY{f} \ar[rr]\sb{\vre\sb{f}} && \GX{\bY}~.
}
\noindent (see  \S \ref{dspext}). If $f$ and $g$ are cofibrations, both 
\w{\PXY{f}} and \w{\PXY{g}} are actually pullbacks, and the map 
\w{\delta:\PXY{g}\to\GX{\bY}} is a fibration, so \w{\QXY{f}{g}} is the ordinary 
pullback. Furthermore, it is a grouplike strictly associative monoid, and the 
maps $\mu$ and $\nu$ are monoid maps.
\end{defn}

\begin{prop}\label{pinterp}
Two maps as in \wref{eqinterp} for $G$-spaces $\bX$ and $\bZ$ (with monoid action 
maps \w{\zeta\sb{\bX}:G\to\GX{\bX}} and \w[,]{\zeta\sb{\bZ}:G\to\GX{\bZ}} 
respectively) have a $G$-interpolation if and only if there is a map $\Psi$ 
making the following diagram commute up to homotopy:
\mydiagram[\label{eqliftinterp}]{
&&&&& \bB\GX{\bX}\\
\bB G \ar[rrrrru]\sp{\bB\zeta\sb{\bX}} \ar[rrrrrd]\sb{\bB\zeta\sb{\bZ}} 
\ar@{.>}[rrr]^<<<<<<<<<<<<<<<<<<{\Psi} &&& \bB\QXY{f}{g} 
\ar[rru]\sb{\bB\delta\sb{f}\circ\bB\mu} \ar[rrd]\sp{\bB\vre\sb{g}\circ\bB\nu} \\
&&&&& \bB\GX{\bZ}.
}
\end{prop}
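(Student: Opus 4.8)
The plan is to follow closely the proof of Proposition \ref{prelext}(iii), to which this statement reduces once we unwind the definitions. In one direction, suppose we are given a $G$-interpolation as in \wref[.]{eqinterpol} After replacing $f'$ and $g'$ by $G$-cofibrations (using a $G$-CW approximation and the cofibration-mapping-cylinder trick equivariantly), the monoid action maps \w{\zeta\sb{\bX'}} and \w{\zeta\sb{\bY'}} assemble into a monoid map \w{G\to\PXY{f'}} landing in \w{\PXY{f'}} because $G$ is a group, and likewise \w{\zeta\sb{\bY'}} and \w{\zeta\sb{\bZ'}} give a monoid map \w{G\to\PXY{g'}}; since both are compatible over \w{\GX{\bY'}} (they agree on the $\bY'$-coordinate, namely \w[),]{\zeta\sb{\bY'}} they determine a single monoid map \w{z:G\to\QXY{f'}{g'}} into the pullback \wref[.]{eqspinterpol} Applying $\bB$ and using the homotopy invariance of \w{\QXY{f}{g}} (via $h$, $k$, $\ell$, $m$, $n$) together with \w{\bB\zeta\sb{\bX'}\simeq\bB\zeta\sb{\bX}} and \w[,]{\bB\zeta\sb{\bZ'}\simeq\bB\zeta\sb{\bZ}} the composite \w{\bB z} is the desired $\Psi$ in \wref[.]{eqliftinterp}

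For the converse, I would mimic verbatim the rectification argument in the proof of Proposition \ref{prelext}(i). Starting from a homotopy-commuting lift $\Psi$, apply Kan's $G$-functor, realize, and take cofibrant replacement in the model category of strictly associative topological monoids (\cite[Theorem B]{SVogCM}), obtaining a strict diagram
$$
\xymatrix{
\hG \ar[r]\sp{\hrho} & \hQXY{f}{g} \ar[r]\sp{\hat{\mu}} \ar[d]\sp{\hat{\nu}} & \hPXY{f} \\
& \hPXY{g} &
}
$$
with \w{\hQXY{f}{g}} weakly equivalent to \w[.]{\QXY{f}{g}} Composing \w{\hrho} with the (rectified) maps down to \w[,]{\GX{\hX}} \w[,]{\GX{\hY}} \w{\GX{\hZ}} \wwh exactly as in \wref[--]{eqhsquare}\wref[,]{eqhbundle} but now applied twice, to the cofibration \w{\hf:\hX\hra\hY} and the cofibration \w{\hg:\hY\hra\hZ} (which we may arrange simultaneously, e.g.\ working in simplicial sets and using mapping cylinders before realizing) \wwh we obtain monoid action maps making \w[,]{\hX} \w[,]{\hY} \w{\hZ} into strict $\hG$-spaces with \w{\hf} and \w{\hg} both $\hG$-maps. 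Passing to the associated principal $\hG$-bundles and using \w{\bB\hG\simeq\BG} yields free $G$-spaces \w[,]{\bX'} \w[,]{\bY'} \w{\bZ'} with $G$-maps \w{f':\bX'\to\bY'} and \w[,]{g':\bY'\to\bZ'} together with the comparison equivalences $k,\ell,m,n,h$ of \wref[,]{eqinterpol} homotopy-commuting after inverting $m$ or $n$ because \w{\hat{\beta}} (the bar-construction map) only commutes with the classifying maps up to homotopy.

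The main obstacle, as in the earlier proof, is the bookkeeping required to turn the homotopy-commutative diagram \wref{eqhsquare} (now a two-stage version, with \w{\QXY{\hf}{\hg}} sitting over both \w{\PXY{\hf}} and \w[)]{\PXY{\hg}} into a strictly commutative one without disturbing the weak equivalence type of \w[.]{\hQXY{f}{g}} Here one successively changes the comparison maps into fibrations and applies \cite[Lemma 4.16]{BJTurR} in the category of strictly associative monoids to rectify, first the square identifying \w{\hQXY{f}{g}} with the strict pullback, and then the two structure maps; the grouplike hypothesis is used at each stage to pass from \w{g\mapsto g\sp{-1}} style adjustments back into the relevant \w{\PP}-monoid. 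I expect the only genuinely new point (beyond iterating Proposition \ref{prelext}) is checking that the two rectifications \wh over \w{\GX{\hY}} from the $f$-side and from the $g$-side \wh can be made compatible, which follows because they share the common cofibration replacement of $\hY$ and the single monoid map \w[;]{\hrho} once this is arranged, realizing the two towers of principal bundles gives \wref[,]{eqinterpol} completing the proof. \qed
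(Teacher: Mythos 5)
Your proposal is correct and follows essentially the same route as the paper's proof: the forward direction assembles the monoid action maps into a single map \w{G\to\QXY{f'}{g'}} and applies $\bB$, while the converse rectifies $\Psi$ to a strict diagram of topological monoids via \cite[Theorem B]{SVogCM}, compares \w{\hQXY{f}{g}} with \w{\QXY{\hf}{\hg}} (both homotopy pullbacks) after replacing \w{\hf} and \w{\hg} by cofibrations, and then passes to the associated principal bundles exactly as in Proposition \ref{prelext}. The compatibility point you flag over \w{\GX{\hY}} is handled in the paper by the observation that \w{\QXY{\hf}{\hg}} is itself a homotopy limit, so the comparison equivalences induce a single equivalence \w{\hQXY{f}{g}\to\QXY{\hf}{\hg}} through which the lifted monoid map factors.
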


\begin{proof}
Given a $G$-interpolation, the diagram \wref{eqliftinterp} is obtained by 
applying $\bB$ to the corresponding monoid action maps.

Conversely, a homotopy commutative diagram \wref{eqliftinterp} may be lifted 
(together with \wref[)]{eqspinterpol} to a commuting diagram of topological monoids:

\mydiagram[\label{eqinterptg}]{
&\hG \ar[rrr]\sp{\hrho} &&&
\hQXY{f}{g} \ar[lld]\sp{\widehat{\mu}\sb{f}} 
\ar[rrd]\sb{\widehat{\nu}\sb{g}}  &&&& \\
&& \hPXY{f} \ar[lld]\sp{\hdel\sb{f}} \ar[rrd]\sb{\hvr\sb{f}} &&&&
\hPXY{g} \ar[lld]\sp{\hdel\sb{g}} \ar[rrd]\sb{\hvr\sb{g}} && \\
\hGX{\bX} &&&& \hGX{\bY} &&&& \hGX{\bZ}
}
\noindent and we have spaces \w[,]{\hX\simeq\bX} \w[,]{\hY\simeq\bY} and 
\w{\hZ\simeq\bZ} on which \w[,]{\hGX{\bX}} \w[,]{\hGX{\bY}} and \w[,]{\hGX{\bZ}} 
respectively act. Moreover, we have maps \w{\hf:\hX\to\hY} and \w{\hg:\hY\to\hZ} 
(corresponding up to homotopy to $f$ and $g$, respectively), and as in the proof 
of Proposition \ref{prelext}, we may assume $\hf$ and $\hg$ are cofibrations.

Therefore, \wref{eqinterptg} fits into a diagram:
\mydiagram[\label{eqinterpoltg}]{
&&&&
\hQXY{f}{g} \ar[lld]\sp{\widehat{\mu}\sb{f}} \ar[rrd]\sb{\widehat{\nu}\sb{g}} &&&& \\
&& \hPXY{f} \ar@{.>}[dd]\sp{h} \ar[lld]\sp{\hdel\sb{f}} \ar[rrd]\sb{\hvr\sb{f}} &&&&
\hPXY{g} \ar@{.>}[dd]\sp{k} \ar[lld]\sp{\hdel\sb{g}} \ar[rrd]\sb{\hvr\sb{g}} && \\
\hGX{\bX} \ar[dd]\sp{\ell} &&&& \hGX{\bY} \ar[dd]\sp{m} &&&& 
\hGX{\bZ} \ar[dd]\sp{n}\\
&& \PXY{\hf} \ar[lld]\sp{\delta\sb{\hf}} \ar[rrd]\sb{\vre\sb{\hf}} &&&&
\PXY{\hg} \ar[lld]\sp{\delta\sb{\hg}} \ar[rrd]\sb{\vre\sb{\hg}} && \\
\GX{\hX} &&&& \GX{\hY} &&&& \GX{\hZ}
}
\noindent in which \w{\PXY{\hf}} and \w{\PXY{\hg}} are homotopy limits, so 
the homotopy equivalences $\ell$, $m$, and $n$ induce homotopy equivalences $h$ 
and $k$ making \wref{eqinterpoltg} homotopy-commutative. Since \w{\QXY{\hf}{\hg}} 
is also a homotopy limit, $h$ and $k$ induce a homotopy equivalence 
\w[.]{p:\hQXY{f}{g}\to\QXY{\hf}{\hg}}

Composing \w{p\circ\hrho:\hG\to\QXY{\hf}{\hg}} of diagram \wref{eqinterptg} 
with the appropriate structure maps for \w{\QXY{\hf}{\hg}} yields monoid 
action maps \w[,]{\hz\sb{\hX}:\hG\to\GX{\hX}} \w[,]{\hz\sb{\hY}:\hG\to\GX{\hY}} 
and \w{\hz\sb{\hZ}:\hG\to\GX{\hZ}} making $\hf$ and $\hg$ into $\hG$-equivariant 
maps (by definition of \w[),]{\QXY{\hf}{\hg}} with \w{\hz\sb{\hX}} and 
\w{\hz\sb{\hZ}} corresponding up to homotopy to the given monoid action maps 
\w{\zeta\sb{\bX}:G\to\GX{\bX}} and \w[.]{\zeta\sb{\bZ}:G\to\GX{\bZ}}

Passing to the classifying maps for the corresponding principle $G$- and 
$\hG$-bundles as in the proof of Proposition \ref{prelext}, we obtain the 
required $G$-interpolation.
\end{proof}

We now have the following analogue of Proposition \ref{prlex}:

\begin{prop}\label{pintrp}
If \w{p:=g\circ f:\bX\to\bZ} is a $G$-map in \wref[,]{eqinterp}
in any $G$-interpolation for we may assume that $k$, $\ell$, $m$, and $n$ are 
homeomorphisms in \wref[,]{eqinterpol} and that \w[.]{g'\circ f'=p}
\end{prop}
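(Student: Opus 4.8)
The plan is to run the construction of Proposition~\ref{pinterp} and then cut it down by the freeness reductions of Proposition~\ref{prlex}; the hypothesis that $p=g\circ f$ is a $G$-map is precisely what lets one rigidify \emph{both} ends of the interpolation at once, rather than just one, as in the compatible-map case of Proposition~\ref{prlex}.

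\emph{First, the free case.} Suppose $\bX$ and $\bZ$ are free $G$-spaces. Proposition~\ref{pinterp} gives a $G$-interpolation, which by its proof comes from an $\hG$-interpolation --- cofibrations $\hf\colon\hX\hra\hY$, $\hg\colon\hY\hra\hZ$ of cofibrant $\hG$-spaces realizing $f$ and $g$ --- passed to the classifying maps of the associated principal bundles, so that a priori $\bX'=\EG\times\bX$ and $\bZ'=\EG\times\bZ$, as in \wref{eqclmap}. Any point of $\QXY{f}{g}$ over $(\varphi_{\bX},\varphi_{\bZ})$ automatically satisfies $p\circ\varphi_{\bX}=\varphi_{\bZ}\circ p$, so the $\hG$-actions on $\hX$ and $\hZ$ are compatible with the $G$-map $p$, and the composite $\hg\circ\hf$ is an $\hG$-cofibration realizing $p$. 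When $\bX$ and $\bZ$ are themselves free we may therefore replace $\EG\times\bX$ by $\bX$ and $\EG\times\bZ$ by $\bZ$ throughout, exactly as in Proposition~\ref{prlex}, to obtain $G$-maps $f'\colon\bX\to\bY'$, $g'\colon\bY'\to\bZ$ with $\bY'\simeq\bY$ and $g'\circ f'=p$ (for the last equality, see the remark on the main obstacle below). Taking $k=\ell=\Id_{\bX}$ and $m=n=\Id_{\bZ}$ then gives the statement in this case.

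\emph{The general case.} To drop the freeness hypotheses, pull the target down and push the source up. Choose a $G$-fibration and Borel $G$-equivalence $r\colon\bZ_{1}\to\bZ$ with $\bZ_{1}$ free, and let $q\colon\bX_{1}:=\bX\times\EG\to\bX$ be the projection. Since $\bX_{1}$ is free and $r$ is a trivial $G$-fibration for the Borel weak equivalences, the $G$-map $p\circ q$ lifts through $r$ to a $G$-map $\widetilde p\colon\bX_{1}\to\bZ_{1}$; in the (homotopy) pullback $\bY_{0}:=\bY\times_{\bZ}\bZ_{1}$ the maps $f\circ q$ and $\widetilde p$ agree over $\bZ$ and induce $\bX_{1}\to\bY_{0}$, so that $\bX_{1}\to\bY_{0}\to\bZ_{1}$ is a composable pair with free $G$-ends, composite $\widetilde p$, and $\bY_{0}\simeq\bY$. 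The free case supplies $G$-cofibrations $f'\colon\bX_{1}\hra\bY'$, $g'\colon\bY'\hra\bZ_{1}$ with $g'\circ f'=\widetilde p$ and $\bY'\simeq\bY$; post-composing $g'$ with $r$ gives a $G$-map $\bY'\to\bZ$ with $(r\circ g')\circ f'=r\circ\widetilde p=p\circ q$. Now form the pushout $\bY'':=\bX\cup_{\bX_{1}}\bY'$ of $\bX\xleftarrow{q}\bX_{1}\hookrightarrow\bY'$: it inherits a $G$-action, receives a $G$-cofibration $f''\colon\bX\hra\bY''$ and a $G$-homotopy equivalence $\bY'\to\bY''$ (since $q$ is an equivalence and $f'$ a cofibration), and --- because $(r\circ g')\circ f'=p\circ q$ factors through $q$ --- carries a $G$-map $g''\colon\bY''\to\bZ$ with $g''\circ f''=p$. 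As $\bY''\simeq\bY$, unravelling the equivalences puts this in the form \wref{eqinterpol} with $k,\ell,m,n$ all identities.

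\emph{The main obstacle} is the composite, not the spaces: one must produce $g'\circ f'=p$ on the nose, and since the interpolation construction only controls $g'\circ f'$ up to non-equivariant homotopy --- and a non-equivariant homotopy between two $G$-maps need not be a $G$-homotopy --- this cannot be patched afterwards. The way around it is to feed the equivariance of $p$ (the automatic identity $p\,\varphi_{\bX}=\varphi_{\bZ}\,p$ above) into the construction: realize $p$ first, as a single $\hG$-cofibration $\widehat{p}$, and only then factor a cofibrant model of $\widehat{p}$ through $\hY$, so that $\hg\circ\hf=\widehat{p}$ holds by construction rather than up to homotopy; after arranging $g'$ to be a $G$-fibration the residual discrepancy is a genuine $G$-homotopy that can be straightened by a lift, and the pushout, being a colimit, commutes with composition and so preserves the strict factorization. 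Verifying that all four of $k,\ell,m,n$ --- not merely two, as for a compatible map --- can be made identities at once, with the resulting square still satisfying Definition~\ref{dinterp}, is exactly what this extra rigidity secures.
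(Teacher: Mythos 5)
Your proof is correct and is essentially the paper's own argument: you exploit the equivariance of $p$ to get a strict factorization of a free (Borel) model of $p$ through a free $G$-space equivalent to $\bY$, and then rigidify the source by the pushout \wref{eqgpushout} along \w[,]{\bX\times\EG\to\bX} the map $g''$ existing because the two maps into $\bZ$ agree on \w[,]{\bX\times\EG} which is exactly how the paper concludes. The only divergence is organizational: the paper handles the target simply by composing with the projection \w[,]{q\sb{\bZ}\colon\bZ\times\EG\to\bZ} so your free replacement \w[,]{\bZ\sb{1}} the lift $\widetilde{p}$ through $r$, and the pullback \w{\bY\sb{0}} are an avoidable detour, and your ``$G$-homotopy equivalence'' \w{\bY'\to\bY''} should only be claimed as a $G$-map which is a (non-equivariant) homotopy equivalence \wh which is all that is needed, and all the paper asserts.
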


\begin{proof}
In the proof of Proposition \ref{pinterp} we saw that the lifting $\Psi$ in 
\wref{eqliftinterp} allows us to factor the map of free $G$-spaces (i.e., total 
spaces of principle $G$-bundles)
$$
\bX':=\bX\times\EG~\xra{p':=p\times\Id\sb{\EG}}~\bZ\times\EG=:\bZ'
$$
\noindent as the composite of two maps of free $G$-spaces 
\w{\bX'\xra{f'}\bY'\xra{g'}\bZ'} with \w{\bY\simeq\bY'} (using a 
homotopy-factorization of the corresponding classifying maps of the bundles). 
Applying the (homotopy) pushout \wref{eqgpushout} we obtain a commutative diagram 
of $G$-spaces:
\mydiagram[\label{eqgpshout}]{
\ar @{} [drr]|<<<<<<<<<<<<<<{\framebox{\scriptsize{PO}}}
\bX' \ar[d]\sp{q\sb{\bX}}\sb{\simeq} \ar@{^{(}->}[rr]\sp{f'} && 
\bY' \ar[d]\sp{r}\sb{\simeq} \ar[rr]\sp{g'} && \bZ' \ar[rr]\sp{q\sb{\bZ}} && \bZ\\
\bX \ar@{^{(}->}[rr]\sb{f''} && \bY'' \ar@{.>}[rrrru]\sb{g''} &&&&
}
\noindent where the map \w{g''} out of the pushout is induced by
\w{q\sb{\bZ}\circ g':\bY'\to \bZ} and \w[,]{p:=f\circ g:\bX\to \bZ} which agree 
on \w{\bX'} since \w[.]{p'=g'\circ f'=(g\circ f)\times\Id\sb{\EG}}
\end{proof}

\begin{defn}\label{dinterpob}
Given two maps as in \wref{eqinterp} for $G$-spaces $\bX$ and $\bZ$, let 
\w{\rho:\bB\QXY{f}{g}\to\bB\GX{\bX}\times\bB\GX{\bZ}} be the map 
\w{(\bB\delta\sb{f}\circ\bB\mu,\,\bB\vre\sb{g}\circ\bB\nu)} of
\wref[,]{eqliftinterp} and let:
\mydiagram[\label{eqinterpob}]{
\bB\QXY{f}{g}\ \ar@/^{2.5pc}/[rrrrrr]\sp{\rho}
\dotsc \ar[r] & W\sb{n+1} \ar[rr]\sp{p\sb{n+1}} && 
W\sb{n} \ar[rr]\sp{p\sb{n}} && W\sb{n-1} &
\dotsc \bB\GX{\bX}\times\bB\GX{\bZ}
}
\noindent be the Moore-Postnikov tower for $\rho$,
with \w{W\sb{0}:=\bB\GX{\bX}\times\bB\GX{\bZ}} (cf.\ \cite[VI, 3.9]{GJarS}).

If $F$ denotes the homotopy fiber of $\rho$, then up to homotopy each map
\w{p\sb{n+1}:W\sb{n+1}\to W\sb{n}} is a fibration with fiber 
\w[,]{K(\pi\sb{n+1}F,n+1)}  which is classified by a map
\w{\tilde{k}\sb{n}:W\sb{n}\to K\sb{\pi\sb{1}Q\sb{n}}(\pi\sb{n+1}F,n+2)} (see
\cite[Theorem 3.4]{RobM}). Assume by induction on \w{n\geq 0} that we have 
constructed a lift \w{g\sb{n}:\BG\to W\sb{n}} for
\w[.]{g\sb{0}:=(\bB\zeta\sb{\bX},\bB\zeta\sb{\bZ}):\BG\to W\sb{0}=
\bB\GX{\bX}\times\bB\GX{\bZ}}  
Then the \emph{$n$-th obstruction class} for this lift is
$$
[\tilde{k}\sb{n}\circ g\sb{n}]\in[\BG,\,K\sb{\pi\sb{1}Q\sb{n}}(\pi\sb{n+1}F,n+2)]~
\cong~H\sp{n+2}(G;\,\pi\sb{n+1}F)~,
$$
\noindent where $G$ acts on \w{\pi\sb{n+1}F} via 
\w[.]{(g\sb{n})\sb{\#}:G\to\pi\sb{1}W\sb{n}}
\end{defn}

From Proposition \ref{pinterp} we deduce:

\begin{prop}\label{pinterpob}
Two maps as in \wref{eqinterp} for $G$-spaces $\bX$ and $\bZ$ (with monoid action 
maps \w{\zeta\sb{\bX}:G\to\GX{\bX}} and \w[,]{\zeta\sb{\bZ}:G\to\GX{\bZ}} 
respectively) have a $G$-interpolation if and only if the obstruction classes
\w{[\tilde{k}\sb{n}\circ g\sb{n}]\in H\sp{n+2}(G;\,\pi\sb{n+1}F)} successively 
vanish, for some sequence of lifts. There is also a sequence of 
\emph{difference obstructions} in \w{H\sp{n+1}(G;\,\pi\sb{n+1}F)} for 
distinguishing between non-homotopic lifts in \wref[.]{eqliftinterp}
\end{prop}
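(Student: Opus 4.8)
The plan is to reduce the statement of Proposition~\ref{pinterpob} entirely to Proposition~\ref{pinterp} together with standard obstruction theory for lifting a map against a fibration, as organized by the Moore--Postnikov tower of \wref[.]{eqinterpob} By Proposition~\ref{pinterp}, a \ww{G}-interpolation for the maps \wref{eqinterp} exists if and only if the solid diagram \wref{eqliftinterp} admits a lift $\Psi$ along the map \w[.]{\rho:\bB\QXY{f}{g}\to\bB\GX{\bX}\times\bB\GX{\bZ}} Since the $\bB$-spaces here are (up to weak equivalence) $CW$-complexes of the homotopy type of classifying spaces of grouplike topological monoids, and $\bB G$ is a $CW$-complex, this is an instance of the classical lifting problem \w{g\sb{0}:\bB G\to W\sb{0}} against the tower of principal fibrations \w{p\sb{n+1}:W\sb{n+1}\to W\sb{n}} of \wref[.]{eqinterpob}

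First I would recall the formal setup: factor $\rho$ (up to homotopy) through its Moore--Postnikov tower as in \wref[,]{eqinterpob} so that $\bB\QXY{f}{g}\simeq\holim\sb{n} W\sb{n}$, each \w{p\sb{n+1}} is a principal fibration with fiber \w[,]{K(\pi\sb{n+1}F,n+1)} and the obstruction to extending a lift \w{g\sb{n}:\bB G\to W\sb{n}} across \w{p\sb{n+1}} is the homotopy class \w{[\tilde{k}\sb{n}\circ g\sb{n}]} of the composite with the $k$-invariant \w[,]{\tilde{k}\sb{n}:W\sb{n}\to K\sb{\pi\sb{1}W\sb{n}}(\pi\sb{n+1}F,n+2)} living in \w[.]{[\bB G,\,K\sb{\pi\sb{1}W\sb{n}}(\pi\sb{n+1}F,n+2)]} Because \w{\bB G=K(G,1)} is a \ww{K(\pi,1)} and the target is a twisted Eilenberg--Mac~Lane space, this set of homotopy classes is canonically identified with the equivariant (local-coefficient) cohomology group \w[,]{H\sp{n+2}(G;\,\pi\sb{n+1}F)} where $G$ acts on \w{\pi\sb{n+1}F} through \w[.]{(g\sb{n})\sb{\#}:G\to\pi\sb{1}W\sb{n}} Standard obstruction theory then gives: a lift \w{g\sb{n}} extends to \w{g\sb{n+1}} if and only if this class vanishes; and when a lift exists, the set of lifts modulo fiber homotopy is a torsor (or acted on transitively, once one fixes a restriction) by \w[,]{[\bB G,\,K\sb{\pi\sb{1}W\sb{n}}(\pi\sb{n+1}F,n+1)]\cong H\sp{n+1}(G;\,\pi\sb{n+1}F)} which gives the difference obstructions.

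Assembling these: a full lift \w{\Psi=g\sb{\infty}:\bB G\to\bB\QXY{f}{g}} exists if and only if one can choose, inductively in \w[,]{n\geq 0} lifts \w{g\sb{n}} with all obstruction classes \w{[\tilde{k}\sb{n}\circ g\sb{n}]\in H\sp{n+2}(G;\,\pi\sb{n+1}F)} vanishing (the passage to the inverse limit is unobstructed since \w{\bB G} is of finite type over each skeleton, or one invokes the Milnor $\lim\sp{1}$ sequence and notes the relevant tower of pointed sets is surjective at each stage once one works with a fixed compatible sequence of lifts). Combined with Proposition~\ref{pinterp}, this yields the stated equivalence with the existence of a \ww{G}-interpolation, and the difference obstructions in \w{H\sp{n+1}(G;\,\pi\sb{n+1}F)} distinguish the lifts \wref{eqliftinterp} up to homotopy, hence distinguish the resulting \ww{G}-interpolations.

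The main obstacle I anticipate is not any single hard computation but the bookkeeping around the tower: one must be careful that the action of $G$ on \w{\pi\sb{n+1}F} genuinely only depends on the chosen lift \w{g\sb{n}} at the previous stage (so the ambient cohomology groups shift as one changes earlier choices), and that ``successively vanish, for some sequence of lifts'' is the honest statement---one cannot in general fix the target groups in advance. A secondary technical point is justifying the identification \w{[\bB G,\,K\sb{\pi\sb{1}W\sb{n}}(\pi,m)]\cong H\sp{m}(G;\,\pi)} with twisted coefficients, and the \w{\lim\sp{1}}-type argument for passing from compatible finite lifts to a genuine map into \w[;]{\bB\QXY{f}{g}\simeq\holim\sb{n}W\sb{n}} both are standard (see \cite[VI]{GJarS} and \cite[Theorem 3.4]{RobM}) but should be cited rather than reproved.
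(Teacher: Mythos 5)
Your proposal is correct and follows essentially the same route as the paper, which deduces the proposition directly from Proposition \ref{pinterp} together with the Moore--Postnikov tower and obstruction classes already set up in Definition \ref{dinterpob} (citing \cite[VI, 3.9]{GJarS} and \cite[Theorem 3.4]{RobM}). Your added remarks on the coefficient groups depending on the previously chosen lift and on the passage to the limit of the tower are exactly the implicit technical points the paper leaves to the reader.
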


\begin{remark}\label{robst}
As with any obstruction theory, non-vanishing of a cohomology class
\w{[\tilde{k}\sb{n}\circ g\sb{n}]} merely requires that we back-track to an earlier 
stage and try different choices, so in reality we have tree of obstructions, and 
the $G$-interpolation exists if and only if \emph{some} branch extends to infinity.
\end{remark}

\begin{example}\label{eginterp}
We can use the method described here to study $G$-actions on a space $\bY$  
if \w{\pi\sb{i}\bY=0} for \w{i\neq k,m} \wb[:]{k<m} In this case we can choose 
in \w{\bX:=K(\pi,m)} and \w{\bZ:=K(\pi',k)} in \wref[,]{eqinterp} with given 
actions of $G$ on $\pi$ and \w[,]{\pi'} and use Proposition \ref{pinterp} to 
interpolate a $G$-action on $\bY$. As noted in \S \ref{sapplic}, for suitable 
choices of $G$ the obstructions of Proposition \ref{pinterpob} will vanish (e.g., 
for reasons of dimension).
\end{example}

%
%c4   Realizing diagrammatic homotopy actions
%
\sect{Realizing diagrammatic homotopy actions}
\label{cdha}

From now on we assume that $G$ is finite (but see \S \ref{sgen} below). 
Given a $G$-space $\bX$, the associated fixed point set diagram $\uX$ encodes 
the Bredon $G$-homotopy type of $\bX$, by Theorem \ref{telm}. This diagram consists 
of the various fixed point sets \w{\bX\sp{H}} \wb[,]{H\leq G} the inclusions 
\w{i\sp{\ast}:\bX\sp{K}\hra\bX\sp{H}} induced by \w[,]{i:H\hra K} and the 
$G$-action by conjugation: \w[.]{\bX\sp{H}\to\bX\sp{H\sp{a}}} Our goal is to 
provide a ``homotopy version'' of $\uX$, and describe a procedure for realizing 
it by attempting to solve a sequence of simpler lifting problems as in 
Section \ref{ciga}.

\begin{mysubsection}{Filtering $\OG\op$}\label{sfilter}
For any subgroup $H$ of $G$, we define the \emph{length} of $H$ in
$G$, denoted by \w[,]{\lenG H} to be the maximal \w{0\leq k<\infty}
such that there exists a sequence of proper inclusions of subgroups:
\begin{myeq}[\label{eqlength}]
H=H\sb{0}<H\sb{1}<H\sb{2}<\dotsc<H\sb{k-1}<H\sb{k}=G~.
\end{myeq}
\noindent This induces a filtration
\begin{myeq}[\label{eqfilter}]
\F\sb{0}~\subset~\F\sb{1}~\subset~\dotsc\F\sb{k}\subset~\dotsc~\subset~\OG\op
\end{myeq}
\noindent by full subcategories, where
\w{\Obj\F\sb{k}:=\{G/H\in\OG\op~:\ \lenG H\leq k\}} (so
\w[).]{\Obj\F\sb{0}=\{G/G\}}

Since $G$ is finite,  the filtration is exhaustive:
if \w{\lenG\{e\}=N} \wwh that is, the longest possible sequence
\wref{eqlength} in $G$ has $N$ inclusions of proper subgroups \wh then
\w[.]{\F\sb{N}=\OG\op} We let \w{\hF\sb{k}} denote the collection of subgroups 
\w{H<G} such that \w[.]{G/H\in\F\sb{k}}

Let \w{\lra{H}:=\{H\sp{a}~:\ a\in G\}} denote the conjugacy class of a subgroup
\w[:]{H\leq G} note that if \w[,]{H\in\hF\sb{k}} then 
\w[.]{\lra{H}\subseteq\hF\sb{k}}
\end{mysubsection}

\begin{defn}\label{dposet}
Let $\Lambda$ denote the partially ordered set of subgroups of $G$; we can think o
f the opposite category \w{\LG} as a subcategory of \w[.]{\OG} The full 
subcategory \w{\Lambda\sb{H}} consists of all subgroups $K$ with
\w[,]{H<K\leq G} and \w{\Lambda\sb{k}} is the full subcategory of objects in 
filtration \w[.]{\F\sb{k}}
\end{defn}

\begin{defn}\label{dbha}
A \emph{Bredon homotopy action} of $G$
\w{\lra{\wtX,(\Phi\sp{\ast}\sb{H})\sb{\lra{H}\subseteq\Lambda}}} consists of:
\begin{enumerate}
\renewcommand{\labelenumi}{(\roman{enumi})\ }
\item A diagram \w[.]{\wtX:\LG{}\to\TT} 
\item A choice of a representative $H$ in each conjugacy class 
\w[,]{\lra{H}\subseteq\Lambda} equipped with a pointed homotopy \ww{\WH}-action 
\w{\Phi\sp{\ast}\sb{H}:\bB\WH\to\bB\GXp{\tXHH}} on \w[,]{\tXHH} defined by 
the homotopy cofibration sequence:
\begin{myeq}[\label{eqhcofib}]
\tXH~\to~\wtX(H)~\to~\tXHH~,
\end{myeq}
\noindent where \w[.]{\tXH:=\hocolim\sb{\LG\sb{H}}\,\wtX(K)} 
\end{enumerate}
We require that if \w{H'} and $H$ are conjugate, their homotopy cofibration 
sequences \wref{eqhcofib} fit into a homotopy-commuting square \wref[.]{eqhosquare}
\end{defn}

\begin{defn}\label{dcsd}
A cofibrant diagram \w{\uX\sb{k}:\F\sb{k}\to\TT} (in the projective model category 
\w{\TT\sp{\F\sb{k}}} \wwh cf.\ \cite[\S 11.6]{PHirM}) \emph{realizes} a Bredon 
homotopy action \w{\lra{\wtX,(\Phi\sp{\ast}\sb{H})\sb{H\leq G}}} 
\emph{in the $k$-th filtration} if:
\begin{enumerate}
\renewcommand{\labelenumi}{(\alph{enumi})\ }
\item The corresponding homotopy diagram 
\w{(\gamma\circ\uX\sb{k})\rest{\LG\sb{k}}:\LG\sb{k}\to\ho\TT} is weakly equivalent 
to \w[,]{\gamma\circ\wtX\rest{\LG\sb{k}}} for \w{\gamma:\TT\to\ho\TT} the quotient 
functor.
\item For each \w[,]{H\in\F\sb{k}} the pointed action of \w{\WH} on the cofiber
of
\begin{myeq}[\label{eqhcofibre}]
\colim\sb{K>H}\uX\sb{k}(G/K)\to\uX\sb{k}(G/H)
\end{myeq}
\noindent realizes the pointed homotopy action \w[.]{\Phi\sp{\ast}\sb{H}} 
\end{enumerate}

Note that because \w{\uX\sb{k}} is cofibrant, this colimit is a homotopy colimit 
and \wref{eqhcofibre} is a cofibration, and because of \wref[,]{eqhosquare} the 
homotopy action \w{\Phi\sp{\ast}\sb{H}} is defined for \emph{every} \w[,]{H\leq G} 
not only our chosen representatives.

A sequence \w{(\uX\sb{k}:\F\sb{k}\to\TT)\sb{k=0}\sp{\infty}} of such diagrams 
is \emph{coherent} if \w{\uX\sb{k}\rest{\F\sb{k-1}}=\uX\sb{k-1}} for each
\w[.]{k\geq 1}
\end{defn}

\begin{example}\label{egrbha}
If $\bX$ is a $G$-CW complex, let $\wtX$ be the restriction of \w{\uX:\OG\op\to\TT} 
to the subcategory \w[.]{\LG} For any
\w[,]{H\leq G} \w{\tXH:=\hocolim\sb{\LG\sb{H}}~\wtX(K)} is simply 
\w[,]{\bX\sb{H}:=\bigcup\sb{H<K}\,\bX\sp{K}} which is a sub-$\WH$-complex of
\w[.]{\bX\sp{H}} The quotient \w{\XHH:=\bX\sp{H}/\bX\sb{H}} is the cofiber of the 
inclusion \w[,]{j\sb{H}:\bX\sb{H}\hra\bX\sp{H}} which is a free pointed 
\ww{\WH}-space (unless \w[),]{\bX\sb{H}=\emptyset} with monoid action map 
\w{\zeta\sp{\ast}\sb{\XHH}:\WH\to\GXp{\XHH}} and
\w[.]{\Phi\sp{\ast}\sb{H}:=\bB\zeta\sp{\ast}\sb{\XHH}:\bB\WH\to\bB\GXp{\XHH}}
Evidently \w{\uX:\F\sb{\infty}=\OG\op\to\TT} realizes the Bredon homotopy action
\w{\lra{\uX\rest{\LG},(\Phi\sp{\ast}\sb{H})\sb{H\leq G}}} we have just defined 
(in all filtrations). In this case, we also say that the $G$-space $\bX$ 
\emph{realizes} \w[.]{\lra{\uX\rest{\LG},(\Phi\sp{\ast}\sb{H})\sb{H\leq G}}}
\end{example}

\begin{defn}\label{ddiaglift}
If \w{\uX\sb{k}:\F\sb{k}\to\TT} realizes a Bredon homotopy action 
\w{\lra{\wtX,(\Phi\sp{\ast}\sb{H})\sb{H\leq G}}} in the $k$-th filtration, we may 
realize the pointed homotopy action \w{\Phi\sp{\ast}\sb{H}} of \w{\WH} on 
\w{\tXHH} by a topological pointed action of \w{\WH} on a space 
\w{\uXHH\simeq\tXHH} (see Proposition \ref{ppfree} below). 
This fits into a homotopy cofibration sequence:
\begin{myeq}[\label{eqhocofib}]
\ubX{H}~\xra{j\sb{H}}~\wtX(H)~\xra{q\sb{H}}~\uXHH~,
\end{myeq}
\noindent where \w{\ubX{H};=\hocolim\sb{K>H}\uX\sb{k}(G/K)} (homotopic to 
\wref[).]{eqhcofib}
Note that the action of \w{N\sb{G}H} on \w{\F\sb{k}} by conjugation defines 
a \ww{\WH}-action on \w[.]{\ubX{H}}

The $H$-\emph{lifting problem} for \w{\uX\sb{k}} is to find a map 
\w{\Psi\sb{H}} making the following diagram commute up to homotopy:

\mydiagram[\label{eqliftprob}]{
&&&&& \bB\GX{\ubX{H}}\\
\bB\WH \ar[rrrrru]\sp{\bB\zeta\sb{\ubX{H}}} \ar[rrrrrd]\sb{\bB\zeta\sb{\uXHH}} 
\ar@{.>}[rrr]^<<<<<<<<<<<<<<<<<<{\Psi\sb{H}} &&& \bB\QXY{j\sb{H}}{q\sb{H}} 
\ar[rru]\sb{\bB\delta\sb{j\sb{H}}\circ\bB\mu} 
\ar[rrd]\sp{\bB\vre\sb{q\sb{H}}\circ\bB\nu} \\ 
&&&&& \bB\GX{\uXHH}
}
\noindent in the notation of \wref[.]{eqspinterpol}
\end{defn}

We are now in a position to state our main result:

\begin{thm}\label{trha}
A Bredon homotopy action \w{\cA:=\lra{\wtX,(\Phi\sp{\ast}\sb{H})\sb{H\leq G}}} 
for a finite group $G$ can be realized by a $G$-space $\bX$ if and only 
if one can inductively construct a coherent sequence of cofibrant diagrams 
\w{(\uX\sb{k}:\F\sb{k}\to\TT)\sb{k=0}\sp{\infty}} realizing $\cA$, where one can 
extend \w{\uX\sb{k}} to \w{\uX\sb{k+1}} if and only if for each 
\w[,]{\lra{H}\subseteq\hF\sb{k+1}\setminus\hF\sb{k}} there is an \w{H\in\lra{H}} 
for which the $H$-lifting problem \wref{eqliftprob} can be solved.
\end{thm}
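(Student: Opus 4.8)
The plan is to prove the theorem by descending induction on the filtration, using the interpolation machinery of Section~\ref{ciga} at each stage, and then invoking Elmendorf's theorem (Theorem~\ref{telm}) to pass from a completed \w{\OG\op}-diagram to a $G$-space. The forward direction is essentially Example~\ref{egrbha}: if $\bX$ is a $G$-CW complex realizing $\cA$, then the diagrams \w{\uX\sb{k}:=(\uX\rest{\F\sb{k}})} (after cofibrant replacement in \w[,]{\TT\sp{\F\sb{k}}} which does not disturb the homotopy type or the induced \ww{\WH}-actions on cofibers) form a coherent realizing sequence, and the $H$-lifting problem \wref{eqliftprob} is solved on the nose by \w[,]{\Psi\sb{H}:=\bB z\sb{H}} where \w{z\sb{H}:\WH\to\QXY{j\sb{H}}{q\sb{H}}} is assembled from the strict \ww{\WH}-actions on \w[,]{\ubX{H}} \w[,]{\bX\sp{H}} and \w{\XHH} exactly as in the proof of Proposition~\ref{pinterp} (the three actions are compatible because they all come from the single $G$-action on $\bX$, and $\WH$ is a group so the pair lands in the monoid \w{\QXY{j\sb{H}}{q\sb{H}}} rather than merely in \w[).]{\tQXY{j\sb{H}}{q\sb{H}}}

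For the converse, suppose the coherent sequence \w{(\uX\sb{k})\sb{k=0}\sp{\infty}} has been constructed. Since $G$ is finite the filtration \wref{eqfilter} is exhaustive, so \w{\uX\sb{N}:\F\sb{N}=\OG\op\to\TT} is a full (cofibrant) \ww{\OG\op}-diagram. Applying Elmendorf's right adjoint $C$ of Theorem~\ref{telm} produces a $G$-space \w[,]{\bX:=C\uX\sb{N}} which by Remark~\ref{rgcw} is a $G$-CW complex, and whose fixed point diagram is weakly equivalent to \w[,]{\uX\sb{N}} hence, by condition~(a) of Definition~\ref{dcsd}, realizes the underlying diagram \w[.]{\wtX} It remains to check that $\bX$ realizes the pointed homotopy actions \w[:]{\Phi\sp{\ast}\sb{H}} but by construction (condition~(b)) the \ww{\WH}-action on the cofiber of \wref{eqhcofibre} for \w{\uX\sb{N}} realizes \w[,]{\Phi\sp{\ast}\sb{H}} and this cofiber agrees up to \ww{\WH}-equivalence with \w{\bX\sp{H}/\bX\sb{H}=\XHH} as in Example~\ref{egrbha}; the square \wref{eqhosquare} guarantees this holds for every $H$, not just the chosen representatives. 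So $\bX$ realizes \w[.]{\cA}

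The heart of the argument is the inductive step: given a realizing \w{\uX\sb{k}:\F\sb{k}\to\TT} and, for each \w[,]{\lra{H}\subseteq\hF\sb{k+1}\setminus\hF\sb{k}} a solution \w{\Psi\sb{H}} to \wref[,]{eqliftprob} one must produce a cofibrant \w{\uX\sb{k+1}:\F\sb{k+1}\to\TT} with \w[.]{\uX\sb{k+1}\rest{\F\sb{k}}=\uX\sb{k}} Here I would proceed one conjugacy class at a time. Fix a representative \w[;]{H\in\lra{H}} then \w{\ubX{H}=\hocolim\sb{K>H}\uX\sb{k}(G/K)} carries a \ww{\WH}-action (conjugation on \w[),]{\F\sb{k}} \w{\uXHH} carries the topological \ww{\WH}-action realizing \w{\Phi\sp{\ast}\sb{H}} supplied by Proposition~\ref{ppfree}, and \w{j\sb{H},q\sb{H}} are as in \wref[.]{eqhocofib} The solution \w{\Psi\sb{H}} to \wref{eqliftprob} is precisely the hypothesis of Proposition~\ref{pinterp} (with \w[,]{\bX\rightsquigarrow\ubX{H}} \w[,]{\bZ\rightsquigarrow\uXHH} \w[,]{f\rightsquigarrow j\sb{H}} \w[,]{g\rightsquigarrow q\sb{H}} \w[),]{G\rightsquigarrow\WH} so by that proposition together with Proposition~\ref{pintrp} (the composite \w{q\sb{H}\circ j\sb{H}} is \ww{\WH}-equivariantly null, being the zero map to a cofiber) we obtain a \ww{\WH}-interpolation: a \ww{\WH}-space \w{\uX\sb{k+1}(G/H)\simeq\wtX(H)} with \ww{\WH}-maps \w{\ubX{H}\to\uX\sb{k+1}(G/H)\to\uXHH} composing to the basepoint, the first being a cofibration after the usual replacement. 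One then transports the \ww{\WH}-action to the full conjugacy class by setting \w{\uX\sb{k+1}(G/H\sp{a}):=\uX\sb{k+1}(G/H)} with the conjugated action, using \wref{eqhosquare} to see this is well defined; the morphisms of \w{\OG\op} among the new objects and into \w{\F\sb{k}} are the structure maps of the interpolation together with the action maps, and one checks functoriality using that all choices were made \ww{\WH}-equivariantly and compatibly across the orbit \w[.]{G/H} Finally a single cofibrant replacement in \w{\TT\sp{\F\sb{k+1}}} rel \w{\uX\sb{k}} (possible since \w{\TT\sp{\F\sb{k}}\hookrightarrow\TT\sp{\F\sb{k+1}}} is left Quillen along the inclusion of a sieve-type subcategory) yields the required cofibrant \w[.]{\uX\sb{k+1}} The main obstacle is the bookkeeping in this last paragraph --- verifying that the interpolations chosen for the various representatives $H$ glue into an honest \ww{\OG\op}-functor, i.e.\ that the conjugation action and the inclusion-induced maps \w{i\sp{\ast}:\uX\sb{k+1}(G/L)\to\uX\sb{k+1}(G/H)} are strictly compatible --- which is exactly the kind of equivariant rectification the introduction flags as the delicate point, and which forces the appeal to \wref{eqhosquare} and to strict (rather than merely homotopy-coherent) choices in Propositions~\ref{pinterp} and \ref{pintrp}.
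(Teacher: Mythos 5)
Your proposal is correct and follows essentially the same route as the paper: the forward direction via Example \ref{egrbha} (with the lifting $\Psi\sb{H}$ coming from applying $\bB$ to the strict actions), and the inductive step by combining Proposition \ref{ppfree} (realizing $\Phi\sp{\ast}\sb{H}$ by a pointed $\WH$-action), Proposition \ref{pinterp} (interpolation from the solved lifting problem), and Proposition \ref{pintrp} (strict equivariance of the inclusion, using that $q\sb{H}\circ j\sb{H}$ is the constant $\WH$-map), then propagating to the whole conjugacy class by conjugated actions and concluding with Theorem \ref{telm}. The only differences are presentational (the paper treats the $k=1$ case separately, splitting maximal subgroups into self-normalizing and normal ones, and builds the diagrams cofibrant as it goes rather than via a final cofibrant replacement).
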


\begin{proof}
If $\cA$ can be realized by a $G$-space $\bX$, the corresponding diagrams 
\w{\uX\sb{k}} were described in Example \ref{egrbha}.

To see that solving the $H$-lifting problem suffices to extend an 
inductively-defined \w{\uX\sb{k}} to \w[,]{\uX\sb{k+1}} we start with 
\w{\uX\sb{0}(G/G):=\wtX(G)} (which we denote by $\bY$).
To construct \w[,]{\uX\sb{1}} we must consider all maximal proper subgroups 
\w[,]{M\in\hF\sb{1}} which are of two types:

\begin{enumerate}
\renewcommand{\labelenumi}{(\alph{enumi})\ }
\item If \w[,]{N\sb{G}M=M} then \w{W\sb{M}=\{e\}} and the correspondence 
\w{aM\mapsto M\sp{a}} is a bijection between \w{G/M} and \w[.]{\lra{M}}
In this case we change \w{\bY=\wtX(G/G)\to\wtX(G/M)} into a cofibration 
\w{i:\bY\hra\bZ\sb{(M)}} (with no group action), and form a diagram consisting 
of a copy \w{i\sb{(M\sp{a})}:\bY\hra\bZ\sb{(M\sp{a})}} of $i$ for each coset 
\w[,]{M\sp{a}\in\lra{M}} with \w{\uX\sb{1}(\cp{M}{a})} the homeomorphism 
identifying $\bZ$ with \w{\bZ\sb{(M\sp{a})}} (relative to the fixed subspace $\bY$).
\item Otherwise \w[,]{N\sb{G}M=G} so \w{W\sb{M}=G/M} and \w{\lra{M}} is a singleton. 
We then apply Proposition and \ref{pinterp} to obtain a \ww{W\sb{M}}-action on 
\w[,]{\bZ\sb{(M)}\simeq\wtX(G/M)} extending the trivial action on 
\w[.]{\bY:=\wtX(G/G)} This is possible since we assume that the $M$-lifting problem 
can be solved. The action map \w{\zeta\sb{\bZ\sb{(M)}}:G/M\to\AX{\bZ\sb{(M)}}} 
lifts to a $G$-action via the homomorphism \w[.]{G\epic G/M}
\end{enumerate}

Since all the conjugation $G$-actions we have described agree on $\bY$ (where 
they are trivial), we obtain a diagram \w[,]{\uX\sb{1}:\F\sb{1}\to\TT} whose 
restriction to \w{\LG\sb{1}} consists of the inclusions
\w{\bY\hra\bZ\sb{(M)}} for all \w[.]{M\in\hF\sb{i}\setminus\hF\sb{i-1}}

At the $k$-th stage of the induction, we assume given a cofibrant diagram 
\w{\uX\sb{k-1}:\F\sb{k-1}\to\TT} realizing \w{\wtX} up to filtration \w[.]{k-1} 
In particular, for each \w{H\in\hF\sb{k}\setminus\hF\sb{k-1}} we have a space 
\w{\uX\sb{H}:=(\uX\sb{k-1})\sb{H}} as in \S \ref{dposet}, on which \w{N\sb{G}H} acts 
(by conjugation), with \w{H\subseteq N\sb{G}H} acting trivially. Thus \w{\uX\sb{H}} 
has a \ww{\WH}-action compatible with the structure maps of \w[.]{\uX\sb{k-1}}

For each conjugacy class \w[,]{\lra{H}\subseteq\hF\sb{k}\setminus\hF\sb{k-1}} we 
have a specified representative $H$. We use Proposition \ref{ppfree} to lift 
the given pointed homotopy action of \w{\WH} on \w{\tXHH} to a (free) pointed 
action on \w[.]{\XHH\simeq\tXHH}  Next,  use Proposition \ref{pinterp} to 
produce a \ww{\WH}-interpolation of the given \ww{\WH}-actions on \w{\uX\sb{H}} 
and \w{\XHH} for the homotopy cofibration sequence \wref[.]{eqhcofib} 
Denote the new \ww{\WH}-space we have produced by \w[.]{\bZ\sb{(H)}\simeq\wtX(G/H)} 
By Proposition \ref{pintrp}, we may assume that the inclusion 
\w{i\sb{(H)}:\uX\sb{H}\hra\bZ\sb{(H)}} is \ww{\WH}-equivariant (with respect to the 
given conjugation action on \w[).]{\uX\sb{k-1}}

For any conjugate \w[,]{H\sp{a}\in\lra{H}} choose a fixed element \w{a\in G} 
representing the coset \w[,]{a N\sb{G}H\in G/N\sb{G}H\cong\lra{H}}
and let \w[.]{\uX\sb{k}(G/H\sp{a}):=\bZ\sb{(H)}} The \ww{W\sb{H\sp{a}}}-action on
\w{\uX\sb{k}(G/H\sp{a})} is the composite of the action map \w{\WH\to\AX{\bZ\sb{(H)}}} 
with the isomorphism \w{(\rho\sp{H}\sb{a})\sp{-1}\sb{\ast}:W\sb{H\sp{a}}\to\WH} induced by 
\w{\rho\sp{H}\sb{a}:N\sb{G}H\to N\sb{G}H\sp{a}} (conjugation by $a$).

We define \w{i\sb{H\sp{a}}:\uX\sb{H\sp{a}}\hra\bZ\sb{(H)}} to be the composite
\w[.]{i\sb{(H)}\circ\cp{H}{a}} This is \ww{W\sb{H\sp{a}}}-equivariant because
\w{\cp{H}{a}} is induced by \w[,]{\rho\sp{H}\sb{a}} so we have extended \w{\uX\sb{k-1}} 
to a diagram \w[.]{\uX\sb{k}:\F\sb{k}\to\TT}

At the end of the process we have a full \ww{\OG\op} diagram 
\w[,]{\uX\sb{\infty}:=\colim\sb{k\to\infty}\uX\sb{k}} and thus (by Theorem \ref{telm}) 
a $G$-space $\bX$ realizing the given Bredon homotopy action $\cA$.

Note that homotopic maps \w{\Phi\sp{\ast}\sim(\Phi')\sp{\ast}:\bB\WH\to\bB\GXp{\tXHH}} 
induce pointed Borel \ww{\WH}-equivalences \w{\XHH\to\XpHH} (assuming both 
are \ww{\WH}-CW complexes), and homotopic lifts
\w{\Psi\sim\Psi':\bB\WH\to \bB\QXY{j\sb{H}}{q\sb{H}}} in \wref{eqliftprob}
yield Borel equivalent \ww{\WH}-spaces \w{\bZ\sb{H}} and \w[,]{\bZ'\sb{H}} which 
implies that we have a weak equivalence of the resulting \ww{\F\sb{k+1}}-diagrams 
\w{\uX\sb{k+1}} and\w[,]{\uX'\sb{k+1}} since all the structure maps which are not 
inclusions can be described in terms of the conjugation action of $G$.
\end{proof}

\begin{defn}\label{ddiagob}
If \w{\uX\sb{k}:\F\sb{k}\to\TT} realizes a Bredon homotopy action $\cA$ in the $k$-th 
filtration, for each conjugacy class \w[,]{\lra{H}\subseteq\hF\sb{k+1}\setminus\hF\sb{k}}
choose any representative \w[.]{H\in\lra{H}} The
$\lra{H}$-\emph{sequence of obstructions} \w{(e\sb{n})\sb{n=1}\sp{\infty}} to extending 
\w{\uX\sb{k}} to \w{\uX\sb{k+1}} is defined by letting
\w{e\sb{n}\in H\sp{n+2}(\WH;\,\pi\sb{n+1}F)} denote the $n$-th obstruction of 
Proposition \ref{pinterpob} for the $H$-lifting problem \wref[.]{eqliftprob}

The \emph{difference} obstructions \w{f\sb{n}\in H\sp{n+1}(\WH;\,\pi\sb{n+1}F)} for 
distinguishing between different extensions of \w{\uX\sb{k}} to \w{\uX\sb{k+1}} 
are defined analogously.
\end{defn}

\begin{corollary}\label{crha}
For any finite group $G$, a Bredon homotopy action $\cA$ can be realized 
by a $G$-space $\bX$ if and only if for each \w{k\geq 0} and 
\w[,]{\lra{H}\subseteq\hF\sb{k+1}\setminus\hF\sb{k}} (some branch of) the inductively 
defined \ww{\lra{H}}-sequence of obstructions \w{(e\sb{n})\sb{n=1}\sp{\infty}} vanishes. 
Moreover, two such realizations $\bX$ and \w{\bX'} (by $G$-CW complexes) 
are $G$-homotopy equivalent if the corresponding sequence of difference 
obstructions vanish.
\end{corollary}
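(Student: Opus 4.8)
The plan is to derive Corollary \ref{crha} as a formal consequence of Theorem \ref{trha} together with the obstruction-theoretic analysis in Propositions \ref{pinterpob} and \ref{pinterp}, essentially by ``unwinding'' the inductive criterion of Theorem \ref{trha} one lifting problem at a time. First I would observe that by Theorem \ref{trha}, realizing $\cA$ by a $G$-space is equivalent to constructing a coherent sequence $(\uX\sb{k})\sb{k=0}\sp{\infty}$, and the passage from $\uX\sb{k}$ to $\uX\sb{k+1}$ is equivalent to solving, for each conjugacy class $\lra{H}\subseteq\hF\sb{k+1}\setminus\hF\sb{k}$, the $H$-lifting problem \wref{eqliftprob} for \emph{some} representative $H\in\lra{H}$. (Here one uses the homotopy-commuting square \wref{eqhosquare} to see that the choice of representative does not matter, since conjugation by $a\in G$ induces compatible homotopy equivalences of all the data, hence of the towers.) By Proposition \ref{pinterpob} applied to the composable pair $\ubX{H}\xra{j\sb{H}}\wtX(H)\xra{q\sb{H}}\uXHH$ (with the group $\WH$ in place of $G$), the $H$-lifting problem \wref{eqliftprob} is solvable if and only if the obstruction classes $e\sb{n}=[\tilde{k}\sb{n}\circ g\sb{n}]\in H\sp{n+2}(\WH;\,\pi\sb{n+1}F)$ successively vanish along some branch; this is exactly the $\lra{H}$-sequence of obstructions of Definition \ref{ddiagob}. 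Chaining these equivalences over all $k$ and all conjugacy classes in each filtration layer gives the first assertion.

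For the uniqueness statement, I would run the same argument with the difference obstructions. By the last paragraph of the proof of Theorem \ref{trha}, homotopic lifts $\Psi\sb{H}\sim\Psi'\sb{H}$ in \wref{eqliftprob} yield Borel-equivalent $\WH$-spaces $\bZ\sb{(H)}$, $\bZ'\sb{(H)}$, and hence weakly equivalent $\F\sb{k+1}$-diagrams $\uX\sb{k+1}$, $\uX'\sb{k+1}$ extending a fixed $\uX\sb{k}$; combined with the observation that homotopic pointed homotopy actions $\Phi\sp{\ast}\sb{H}\sim(\Phi')\sp{\ast}\sb{H}$ induce pointed Borel $\WH$-equivalences $\XHH\simeq\XpHH$, this means that the $G$-homotopy type of the realization depends only on the homotopy classes of the chosen lifts at each stage. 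The difference obstructions $f\sb{n}\in H\sp{n+1}(\WH;\,\pi\sb{n+1}F)$ of Proposition \ref{pinterpob} classify, stage by stage, whether two such lifts are homotopic; so if all the difference obstructions vanish we can inductively match up the choices made in building $\bX$ and $\bX'$, producing a zig-zag of weak equivalences $\uX\sb{k}\simeq\uX'\sb{k}$ compatible with the filtration, and passing to the colimit and applying Theorem \ref{telm} (the Quillen equivalence of Remark \ref{rgcw}) yields a $G$-homotopy equivalence $\bX\simeq\bX'$.

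The step I expect to be the main obstacle is the bookkeeping needed to make the ``difference-obstruction'' half genuinely inductive: a priori the diagrams $\uX\sb{k}$ and $\uX'\sb{k}$ need not be \emph{equal} but only weakly equivalent, so at the $(k+1)$-st stage one is solving the $H$-lifting problem for two a priori different (though equivalent) pairs of $\WH$-spaces $(\ubX{H},\uXHH)$ and $(\ubX{H}',\uXHH')$, and one must check that the induced equivalence of the monoids $\QXY{j\sb{H}}{q\sb{H}}$ (and of the targets $\bB\GX{\ubX{H}}$, $\bB\GX{\uXHH}$, using Lemma \ref{lgxequiv} and Remark \ref{rgxequiv}) carries one lifting problem to the other, so that the vanishing of the difference obstruction is well-posed and the two chosen lifts can actually be made homotopic compatibly. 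This is the technical heart; once the identifications are set up carefully it reduces to the standard difference-obstruction argument for lifting against a Moore--Postnikov tower, exactly as in Cooke's original treatment \cite{CookR}. I would also note, as in Remark \ref{robst}, that both statements should be read as asserting that \emph{some} branch of the respective obstruction trees survives, not that a fixed sequence of choices works.
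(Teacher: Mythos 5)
Your proposal is correct and follows essentially the same route as the paper, which offers no separate argument for Corollary \ref{crha} but treats it as an immediate consequence of Theorem \ref{trha} combined with Proposition \ref{pinterpob} and Definition \ref{ddiagob} (with the uniqueness half resting on the final paragraph of the proof of Theorem \ref{trha}, exactly as you use it). Your extra care about the well-posedness of the difference obstructions when \w{\uX\sb{k}} and \w{\uX'\sb{k}} are only weakly equivalent is a reasonable elaboration of a point the paper leaves implicit, not a departure from its method.
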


Remark \ref{robst} applies here too, of course.

\begin{mysubsection}{Generalizations}\label{sgen}
The procedure described above extends to some infinite groups $G$, as long as we have a class function \w{\ell:\Lambda\to\kappa} into some ordinal $\kappa$ with \w{\ell(K)\preceq\ell(H)} for \w[.]{H\leq K}
In this case we have a filtration corresponding to \wref{eqfilter} of length $\kappa$, and thus a transfinite inductive procedure as in the proof of Theorem \ref{trha}.

For example, if \w{G=\ZZ} then \w{\ell:\Lambda\to\omega+1} assigns to
\w{n\ZZ\leq\ZZ} the number of (not necessarily distinct) prime factors of $n$, with \w[.]{\ell(\{0\})=\omega} On the other hand, there is no such function $\ell$ for \w{G=\ZZ\sp{2}} or \w[.]{S\sp{1}}
\end{mysubsection}

\begin{mysubsection}{Some simple examples}\label{ssexam}
The approach to realizing homotopy actions described here is quite complicated, in general, even for cyclic groups. Nevertheless, in certain cases the theory simplifies to some extent\vsm :

\noindent\textbf{I}.\ \ In a semi-free action all fixed points are global. In terms of a Bredon homotopy action this implies that the maps
\w{j\sb{H}:\tXH\to\wtX(H)} are homotopy equivalences for
\w[,]{\{e\}\neq H} and thus \w{\tXHH} is contractible \wh but \w{\wtX(G)=:\bY} need not be contractible. However, we do have a trivial $G$-action on $\bY$. Thus we are left with the obstructions of Proposition \ref{pinterpob} for interpolating the given $G$-actions in the homotopy cofibration sequence \w[.]{\bY\to\wtX(\{e\})\to\bZ}
If these vanish, we obtain the required semi-free $G$-action an a space \w[.]{\bX\simeq\wtX(\{e\})}

Note that in this case $G$ need not be finite, so the examples mentioned in \S \ref{sapplic} are relevant here\vsm .

\noindent\textbf{II}.\ \ A necessary condition in order for our obstruction theory to be effectively computable is that the (homotopy groups of) the spaces of self-equivalences
of \w{\tXH} and \w{\tXHH} in \wref{eqhcofib} are known for each \w[.]{H\leq G}

One simple case where this holds is when each of the above spaces is an
Eilenberg-Mac~Lane space (cf.\ \wref[).]{eqseem}
If the groups \w{\pi\sb{\ast}\GX{\wtX(H)}} are know \wh e.g., if \w{\wtX(H)} is
also an Eilenberg-Mac~Lane space \wh then the homotopy groups of
\begin{myeq}[\label{eqmsem}]
\map(\tXH,\wtX(H))\hsp \text{and}\hsp \map(\wtX(H),\tXHH)
\end{myeq}
\noindent are known (by \cite{ThoH}), so we may determine the homotopy groups of \w{\PXY{j\sb{H}}} and \w{\PXY{q\sb{H}}} up to an extension
from the pullback diagram \wref[,]{eqspext} \wref[,]{eqseem} and \wref[,]{eqmsem} respectively, and these determine the homotopy groups of
\w{\QXY{j\sb{H}}{q\sb{H}}} \wwh and thus of the fibers $F$ \wh up to extensions from and \wref{eqspinterpol} and \wref[\vsm.]{eqinterpob}

\noindent\textbf{III}.\ \ The discussion above can also be extended to the case of two-stage Postnikov systems (see \S \ref{sapplic} and Example \ref{eginterp} above).
\end{mysubsection}

%
%app         Appendix:  Pointed homotopy actions
%
\section*{Appendix: \ Pointed homotopy actions}
\label{apha}
\setcounter{thm}{0}
\setcounter{section}{5}

For convenience, we collect here some basic facts about pointed homotopy actions. These are well-known, but we have not found a suitable reference in the literature.

\begin{defn}\label{dpha}
A \emph{pointed homotopy action} of a group $G$ on a pointed space \w{\Xs=(\bX,x\sb{0})} is (the homotopy class of) a map \w[.]{\Phi\sp{\ast}:\BG\to\GXp{\Xs}} It is \emph{realized} by a pointed $G$-action \w{\varphi\sp{\ast}\sb{\Ys}:G\to\AXp{\Ys}} if there is a (pointed) homotopy equivalence \w{h:\Xs\to\Ys} such that

\mydiagram[\label{eqrpha}]{
&&&&&\bB\GXp{\Xs}\ar[d]\sp{\bB h\sb{\ast}} \\
\BG \ar[rrrrru]\sp{\Phi\sp{\ast}} \ar[rrr]\sb{\bB\varphi\sp{\ast}\sb{\Ys}} &&&
\bB\AXp{\Ys} \ar[rr]\sb{\bB i} && \bB\GXp{\Ys}
}
commutes up to homotopy.
\end{defn}

\begin{defn}\label{dphlift}
A $G$-action \w{\varphi:G\to\AX{\bX}} on a space $\bX$
\emph{lifts weakly to a pointed action} \w{\varphi\sp{\ast}:G\to\AXp{\Ys}}
if we have Borel $G$-equivalences \w{p:\bZ\to\bX} and \w[,]{p':\bZ\to\bY} with sections \w{i:\bX\to\bZ} and \w{i':\bY\to\bZ} as in Lemma \ref{lgxequiv},
such that the diagram of associative topological monoids (and multiplicative maps):
\mydiagram[\label{eqtwoact}]{
&&\AXp{\Ys} \ar@{^{(}->}[r] & \GXp{\Ys} \ar@{^{(}->}[r] &
\GX{\bY} \ar[r]\sp{i'\sb{\star}} & \GX{\bZ} \\
%\GX{\bX}\ar@{^{(}->}[d]\sp{j} \\
G \ar[rru]\sp{\varphi\sp{\ast}\sb{\Ys}} \ar[rr]\sp{\varphi\sb{\bX}} &&
\AX{\bX}\ar@{^{(}->}[rrr] &&& \GX{\bX} \ar[u]\sp{i\sb{\star}}
}
\noindent commutes up to homotopy after inverting the homotopy equivalence \w[.]{i\sb{\star}}

Since we assumed that $\bX$, $\bY$, and $\bZ$  are CW complexes, any homotopy equivalence between them can be made into a pointed homotopy equivalence by choosing appropriate (non-degenerate) base-points (cf.\ \cite[Theorem 3.6]{DoldH}). As a result, we may assume that $\bX$ and $\bY$ in Definition \ref{dphlift} are pointed.
\end{defn}

If \w{\Xs=(\bX,x\sb{0})} is a $G$-space with chosen base point \w[,]{x\sb{0}}  and the $G$-action is free on \w[,]{\bX\setminus\{x\sb{0}\}} we call \w{\Xs} a \emph{free pointed $G$-space}. For any pointed $G$-space, the \emph{associated free pointed $G$-space} is the quotient
$$
\EGXp{G}{\bX}~:=~\EGX{G}{\bX}/\EGX{G}{\{x\sb{0}\}}~,
$$
\noindent  with $G$-action induced from the diagonal action on
\w[.]{\EGX{G}{\bX}} A \emph{homotopy fixed point} for a $G$-space $\bX$ is a
$G$-map \w[.]{f:\EG\to\bX} and we have:

\begin{lemma}\label{lpointed}
Any pointed $G$-space \w{\Xs} has a $G$-map
\w{r:\EGXp{G}{\bX}\to\bX} which is a pointed homotopy equivalence; if
\w{\Xs} is a \emph{free} pointed $G$-space, the map $r$ is a
$G$-homotopy equivalence.
\end{lemma}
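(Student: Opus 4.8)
The plan is to exhibit the map $r$ explicitly as the composite of the projection away from the Borel construction with an identification, and then run a standard fibration/cofibration comparison to see it is a (pointed) homotopy equivalence. First I would recall that the unpointed Borel projection $\pr\colon\EGX{G}{\bX}\to\bX$ (projection onto the second factor) is a homotopy equivalence, since $\EG$ is contractible; moreover it is $G$-equivariant for the diagonal action on the source and the given action on the target. Restricting to $\EGX{G}{\{x\sb{0}\}}\cong\EG$ we get a $G$-equivariant homotopy equivalence onto $\{x\sb{0}\}$, so $\pr$ descends to the quotient
$$
r\colon\EGXp{G}{\bX}=\EGX{G}{\bX}/\EGX{G}{\{x\sb{0}\}}~\lora~\bX/\{x\sb{0}\}=\bX,
$$
which is a pointed $G$-map (the basepoint of the source being the image of $\EGX{G}{\{x\sb{0}\}}$). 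Since $\bX$, hence $\Xs$, is a CW complex with nondegenerate basepoint (Remark \ref{rgcw} lets us assume our spaces are $G$-CW, so the inclusion of the basepoint orbit is a $G$-cofibration and in particular an ordinary cofibration), collapsing the contractible cofibrant subspace $\EGX{G}{\{x\sb{0}\}}$ in the source and the point $\{x\sb{0}\}$ in the target are both homotopy equivalences; combined with the fact that $\pr$ itself is a homotopy equivalence and is compatible with these collapses, the gluing lemma for cofibrations gives that $r$ is a homotopy equivalence. Pointedness is automatic since all maps and homotopies in sight can be taken basepoint-preserving once the basepoint is nondegenerate.

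For the second assertion, suppose $\Xs$ is a free pointed $G$-space, i.e.\ $G$ acts freely on $\bX\setminus\{x\sb{0}\}$. Then the diagonal action on $\EGX{G}{(\bX\setminus\{x\sb{0}\})}$ is free, and so is the induced action on the quotient $\EGXp{G}{\bX}$ away from its basepoint; thus both source and target of $r$ are free pointed $G$-CW complexes. A pointed map between free pointed $G$-CW complexes which is a (nonequivariant) homotopy equivalence is a $G$-homotopy equivalence: one can see this either via equivariant obstruction theory (the relevant obstruction and difference groups involve fixed-point sets $\bX\sp{H}$ for $H\neq\{e\}$, which here are just the basepoint, so everything is concentrated in the free part where Bredon cohomology reduces to ordinary cohomology of the orbit spaces), or, more concretely, by noting that $r$ restricts over $\bX\setminus\{x\sb{0}\}$ to a $G$-equivariant homotopy equivalence of free $G$-spaces $\EGX{G}{(\bX\setminus\{x\sb{0}\})}\to\bX\setminus\{x\sb{0}\}$, which is a $G$-homotopy equivalence by the classical theorem on maps of free $G$-CW complexes (e.g.\ \cite[II.2]{DiecTG}), and then reattaching the basepoint. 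Either way we conclude $r$ is a $G$-homotopy equivalence.

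The only genuinely delicate point is bookkeeping with basepoints: one must ensure the basepoint of $\EGXp{G}{\bX}$ is nondegenerate (which follows from $x\sb{0}\in\bX$ being a nondegenerate basepoint and $\EG$ being built as a CW complex, so that $\EGX{G}{\{x\sb{0}\}}\hra\EGX{G}{\bX}$ is a cofibration) and that the collapse maps are pointed homotopy equivalences rather than merely unpointed ones — this is exactly where the CW hypothesis and the cited results of Dold are used. Everything else is the standard contractibility of $\EG$ together with the fact, already exploited repeatedly in the paper, that a nonequivariant homotopy equivalence between free $G$-CW complexes is $G$-equivariant up to $G$-homotopy.
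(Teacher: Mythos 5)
Your construction of $r$ and the way you prove it is a (pointed) homotopy equivalence are essentially the paper's own argument: the paper compares the two cofibration sequences $\EGX{G}{\{x\sb{0}\}}\hra\EGX{G}{\bX}\epic\EGXp{G}{\bX}$ and $\{x\sb{0}\}\hra\bX\to\bX$ via the second-factor projections and invokes the gluing lemma, and for the free case it cites the James--Segal equivariant Whitehead theorem exactly on the grounds you give in your first alternative (all fixed-point sets for $\{e\}\neq H\leq G$ are just the basepoint, where $r$ is the identity). Your second alternative, restricting to $\bX\setminus\{x\sb{0}\}$ and ``reattaching the basepoint,'' is the only shaky step (that complement need not be a $G$-CW complex and the extension of $G$-homotopy inverses over the basepoint is not automatic), but since your fixed-point-set argument already suffices, the proposal is correct and follows the paper's route.
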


\begin{proof}
We have a diagram of $G$-spaces:
\mydiagram[\label{eqmapcofibseq}]{
\EGX{G}{\{x\sb{0}\}}~~\ar@{^{(}->}[rr]\sp{j} \ar[d]\sp{p}\sb{\he} &&
\EGX{G}{\bX} \ar@{->>}[rr]\sp{s} \ar[d]\sp{q}\sb{\he}
\ar@/_{1.5pc}/[ll]\sb{\varphi} &&
\EGXp{G}{\bX} \ar[d]\sp{r} \\
\{x\sb{0}\}~~\ar@{^{(}->}[rr] && \bX \ar@{->}[rr]\sp{\Id} && \bX
}
\noindent where the vertical maps are projections onto the second
factor. Since each row is a cofibration sequence and $p$ and $q$ are
Borel $G$-equivalences, so is $r$.
If the pointed action on \w{\Xs} is free, $r$ induces homotopy
equivalences on all fixed point sets (which consist only of the
basepoint for all \w[),]{\{e\}\neq H\leq G} so by
\cite[Theorem (1.1)]{JSegaE} $r$ is in fact a
$G$-homotopy equivalence.
\end{proof}

\begin{lemma}\label{lfhpt}
A $G$-space $\bX$ with action \w{\varphi:G\to\AX{\bX}} has a homotopy
fixed point corresponding to each weak lift of $\varphi$  to a pointed
action \w[.]{\varphi\sp{\ast}:G\to\AXp{\Ys}}
\end{lemma}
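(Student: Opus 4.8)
The plan is to obtain the homotopy fixed point by carrying the ($G$-fixed) base point of $\bY$ across the zig-zag $\bX\xleftarrow{p}\bZ\xrightarrow{p'}\bY$ supplied by the weak lift. Since $\varphi\sp{\ast}\sb{\Ys}:G\to\AXp{\Ys}$ is a \emph{pointed} action, the base point $y\sb{0}\in\bY$ is $G$-fixed, so the constant map $c:\EG\to\{y\sb{0}\}\hra\bY$ is a $G$-map; it is the evident homotopy fixed point of $\bY$. It therefore suffices to (i) lift $c$ through the Borel $G$-equivalence $p':\bZ\to\bY$, up to $G$-homotopy, to a $G$-map $\tilde{c}:\EG\to\bZ$, and then (ii) compose with the $G$-map $p:\bZ\to\bX$, obtaining the $G$-map $f:=p\circ\tilde{c}:\EG\to\bX$ --- this is the homotopy fixed point associated with the given weak lift. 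Observe that only part of the data of Definition \ref{dphlift} enters: we use that $\bZ$ is a $G$-space carrying $G$-maps $p,p'$ to $\bX$ and $\bY$ that are non-equivariant homotopy equivalences, together with the $G$-fixed base point of $\bY$; the sections $i,i'$ and the homotopy-commutativity of \wref{eqtwoact} play no role here.

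For step (i), pass to Borel constructions: applying $\EG\times\sb{G}(-)$ to $p'$ yields a map over $\BG$
\[
\EG\times\sb{G}\bZ~\lora~\EG\times\sb{G}\bY
\]
of Hurewicz fibrations with fibers $\bZ$ and $\bY$, whose underlying map of total spaces is a homotopy equivalence (homotopy orbits preserve equivalences). As $\BG$ may be taken to be a CW complex, Dold's theorem makes this a fiber homotopy equivalence, hence it admits a \emph{fiberwise} homotopy inverse $\sigma$. The $G$-map $c$ is the same datum as a section $\bar{c}:\BG\to\EG\times\sb{G}\bY$, and then $\sigma\circ\bar{c}$ is a section of $\EG\times\sb{G}\bZ\to\BG$, i.e.\ a $G$-map $\tilde{c}:\EG\to\bZ$, satisfying $p'\circ\tilde{c}\simeq\sb{G}c$ because $(\EG\times\sb{G}p')\circ\sigma\simeq\sb{\BG}\Id$. (Equivalently: a $G$-map which is a non-equivariant equivalence induces an equivalence on the homotopy fixed-point spaces $\mapg(\EG,-)$, so $c\in\mapg(\EG,\bY)$ lifts to $\mapg(\EG,\bZ)$; or one replaces $\EG$ by the $G$-homotopy pullback of $\EG\xrightarrow{c}\bY\xleftarrow{p'}\bZ$, notes that its fixed sets are empty off $\{e\}$ so it is $G$-weakly equivalent to the free $G$-CW complex $\EG$, and takes a $G$-homotopy section.)

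The single point to be careful about is the assertion underpinning step (i): that a Borel $G$-equivalence --- a $G$-map that is only a \emph{non-equivariant} homotopy equivalence --- nonetheless suffices to lift the $G$-map $c$ through it up to $G$-homotopy, equivalently that the homotopy fixed-point functor inverts Borel $G$-equivalences. This is standard, but the write-up should pin down the models: take $\EG$ to be a $G$-CW complex, so that the Borel constructions are genuine fibrations over the cofibrant space $\BG$, and invoke $G$-CW approximation together with the equivariant Whitehead theorem wherever the homotopy-pullback formulation is used. Everything else --- the pointed action furnishing the $G$-fixed point $y\sb{0}$, the appeal to Dold's theorem, and the final composition with $p$ --- is routine.
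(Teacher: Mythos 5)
Your argument is correct, and its skeleton coincides with the paper's: both proofs start from the constant $G$-map $c\sb{y\sb{0}}:\EG\to\bY$ at the $G$-fixed basepoint supplied by the pointed action, transport it backwards along the zig-zag $\bX\xleftarrow{p}\bZ\xrightarrow{p'}\bY$ of Borel $G$-equivalences from Definition \ref{dphlift}, and (as you note) never use the sections $i,i'$ or the homotopy-commutativity of \wref{eqtwoact} for this direction. Where you genuinely differ is in the key lemma justifying the central step, namely inverting the Borel $G$-equivalence $p'$ against the free $G$-space $\EG$. The paper crosses with $\EG$: since $\Id\times p':\EG\times\bZ\to\EG\times\bY$ is a $G$-map of \emph{free} $G$-CW complexes which is a non-equivariant homotopy equivalence, the James--Segal theorem \cite[Theorem (1.1)]{JSegaE} makes it a genuine $G$-homotopy equivalence, and one composes $(\Id,c\sb{y\sb{0}})$ with a $G$-inverse and then with the projection to $\bZ$ followed by $p$. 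You instead pass to the Borel constructions over $\BG$ and invoke Dold's fiber homotopy equivalence theorem to produce a section of $\EG\times\sb{G}\bZ\to\BG$ lifting the section corresponding to $c\sb{y\sb{0}}$ --- equivalently, the standard fact that $\mapg(\EG,-)$ inverts Borel $G$-equivalences. The two devices are essentially interchangeable, and your version buys a slightly more conceptual formulation (homotopy fixed points only see the underlying equivalence), at the cost of having to police the point-set models, which you do. One small tidy-up: Dold's criterion is an equivalence on \emph{fibers}, and the fiber map of $\EG\times\sb{G}p'$ is (a translate of) $p'$ itself, which is an equivalence by hypothesis; so the parenthetical appeal to the total-space equivalence is unnecessary. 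Finally, note that the paper's proof also records the converse construction (a homotopy fixed point $f:\EG\to\bX$ yields a weak pointed lift, by factoring $f$ in $\GT$ and collapsing $\EG$), and it is that direction which is actually invoked in Proposition \ref{ppfree}; your write-up proves exactly the implication stated in the lemma, so this is not a gap in your proof, but the correspondence is used in both directions in the paper.
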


\begin{proof}
A weak lift of $\varphi$ to a pointed action \w{\varphi\sp{\ast}:G\to\AXp{\Ys}} yields a fixed point \w[,]{y\sb{0}\in\bY} and thus a homotopy fixed point for
\w{\Ys=(\bY,y\sb{0})} given by the constant map \w{c\sb{y\sb{0}}:\EG\to\bY}
(which is a $G$-map). This lifts to a homotopy fixed point
\w[.]{\hat{f}:(\Id,c\sb{y\sb{0}}):\EG\to\EGX{G}{\bY}} Since
\w{\Id\times h:\EGX{G}{\bZ}\to\EGX{G}{\bY}} is a $G$-map of free
$G$-CW complexes (\S \ref{rgcw}) which is also a homotopy equivalence, it is actually a $G$-homotopy equivalence by \cite[Theorem (1.1)]{JSegaE}, with
$G$-inverse \w[.]{h\sp{-1}:\EGX{G}{\bY}\to\EGX{G}{\bZ}} The $G$-map
\w{k\circ h\sp{-1}\circ\hat{f}:\EG\to\bX} is the corresponding homotopy
fixed point for $\bX$.

Conversely, if \w{f:\EG\to\bX} is a $G$-map, we may factor $f$ in the
model category \w{\GT} (see \S \ref{sgdiagrams}) as
a $G$-cofibration followed by a $G$-fibration weak equivalence: \w[,.]{\EG\xra{\tilde{f}}\bZ\xepic{p}\bX} If we let
\w{\bY:=\bZ/\EG} denote the (homotopy) cofiber of \w[,]{\tilde{f}}
with quotient $G$-map \w[,]{p':\bZ\to\bY} then $\bY$ has a
basepoint \w{y\sb{0}} (corresponding to \w[),]{\EG\subseteq\bZ} fixed
under the $G$-action, and \w{p'} is a Borel $G$-equivalence since \w{\EG} is contractible. Thus the $G$-action on \w{\Ys=(\bY,y\sb{0})} yields the required pointed lift.
\end{proof}

\begin{prop}\label{ppfree}
Any pointed homotopy action \w{\Phi\sp{\ast}:\BG\to\bB\GXp{\Xs}} can be realized 
by a (free) pointed $G$-action.
\end{prop}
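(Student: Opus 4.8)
The plan is to bootstrap from the unpointed realization of \S\ref{shaction} together with Lemmas~\ref{lfhpt} and~\ref{lpointed}. First I would set $\Phi := \bB j \circ \Phi\sp{\ast} : \BG \to \bB\GX{\bX}$, an ordinary homotopy action of $G$ on $\bX$, and realize it, as in \S\ref{shaction}, by a free $G$-space $\Xpp \simeq \bX$; by the definition of ``realizes'' given there, the free quotient $\Xpp/G \to \BG$ is, up to homotopy, the pullback of $\bB j$ along $\Phi$ (and $\bB\zeta\sb{\Xpp} \simeq \Phi$ under the identification of Remark~\ref{rgxequiv}). Consequently a section of $\Xpp/G \to \BG$ is the same datum, up to homotopy, as a lift of $\Phi$ through $\bB j$ --- and $\Phi\sp{\ast}$ is precisely such a lift. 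So $\Phi\sp{\ast}$ determines a section of $\Xpp/G \to \BG$, equivalently --- since $G$ acts freely on $\Xpp$, so that $\Xpp/G \simeq \EG \times_{G} \Xpp$ --- a section of the Borel fibration $\EG \times_{G} \Xpp \to \BG$, and hence, by the standard identification of such sections with $\mapg(\EG,\Xpp)$, a homotopy fixed point $f : \EG \to \Xpp$ for $\Xpp$.

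Next I would feed $f$ into Lemma~\ref{lfhpt} to obtain a weak lift of the action $\varphi\sb{\Xpp}$ to a pointed $G$-action $\varphi\sp{\ast}\sb{\Ys} : G \to \AXp{\Ys}$ on a pointed space whose underlying space $\bY$ is homotopy equivalent to $\bX$, and then replace $\bY$ by its associated free pointed $G$-space $\EGXp{G}{\bY}$, using the pointed $G$-homotopy equivalence of Lemma~\ref{lpointed}, so as to make the action free without changing the underlying homotopy type or the induced pointed homotopy action. What remains is to check that $\bB\zeta\sp{\ast}\sb{\Ys}$, where $\zeta\sp{\ast}\sb{\Ys} := i \circ \varphi\sp{\ast}\sb{\Ys} : G \to \GXp{\Ys}$, becomes homotopic to $\Phi\sp{\ast}$ after the identification $\bB\GXp{\Ys} \simeq \bB\GXp{\Xs}$ induced by the homotopy equivalence $\bY \simeq \bX$ --- i.e.\ that the produced pointed action realizes $\Phi\sp{\ast}$, not merely some lift of $\Phi$.

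This last verification is the step I expect to be the crux. One has $\bB j \circ \bB\zeta\sp{\ast}\sb{\Ys} \simeq \Phi \simeq \bB j \circ \Phi\sp{\ast}$ (the first equivalence because $\varphi\sp{\ast}\sb{\Ys}$ weakly lifts $\varphi\sb{\Xpp}$ and $\bB\zeta\sb{\Xpp} \simeq \Phi$), so $\bB\zeta\sp{\ast}\sb{\Ys}$ and $\Phi\sp{\ast}$ are both lifts of $\Phi$ through $\bB j$; I would argue that they agree by showing that, under the correspondence of the first paragraph, $\bB\zeta\sp{\ast}\sb{\Ys}$ corresponds to the homotopy fixed point that the converse half of the proof of Lemma~\ref{lfhpt} extracts from $\varphi\sp{\ast}\sb{\Ys}$ --- which is $f$ again --- so that $\bB\zeta\sp{\ast}\sb{\Ys}$ and $\Phi\sp{\ast}$ determine the same homotopy fixed point for $\Xpp$, and then invoking that the correspondence between lifts of $\Phi$ through $\bB j$ and homotopy fixed points for $\Xpp$ is a bijection on homotopy classes. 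Making this watertight reduces to checking that the two constructions in the proof of Lemma~\ref{lfhpt} are mutually inverse up to homotopy and that the intermediate equivalences (the pullback along $\Phi$, the section/Borel adjunction, and the passage to $\EGXp{G}{\bY}$) all respect the formation of induced pointed homotopy actions. An alternative route that sidesteps this bookkeeping is to rigidify $\Phi\sp{\ast}$ directly, in the spirit of the proofs of Propositions~\ref{prelext} and~\ref{pinterp}: pull the universal fibration with fibre $\bX$ back along $\bB j$ to obtain, over $\bB\GXp{\Xs}$, a universal fibration with fibre $\bX$ equipped with a section; pull this back along $\Phi\sp{\ast}$ and then along $\EG \to \BG$; and extract a free $G$-space $\simeq \bX$ with a section which, after replacement by the associated free pointed $G$-space, becomes a genuine $G$-fixed basepoint. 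I expect the equivariance issues in that route to cost much the same effort.
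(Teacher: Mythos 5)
Your proposal is correct and takes essentially the same route as the paper's own proof: regard $\Phi\sp{\ast}$ as a lift of $\Phi=\bB j\circ\Phi\sp{\ast}$ through $\bB j$, hence a section over $\BG$ of the pulled-back fibration, hence a homotopy fixed point $f:\EG\to\Xp$ for the free realization of $\Phi$, and then feed $f$ into Lemma \ref{lfhpt}. If anything you are more explicit than the paper, which neither passes to the associated free pointed $G$-space of Lemma \ref{lpointed} (freeness already comes for free, since Lemma \ref{lfhpt} is applied to a free $G$-space) nor spells out the concluding check that the resulting pointed action realizes $\Phi\sp{\ast}$ itself, in the sense of \eqref{eqrpha}, rather than merely some lift of $\Phi$.
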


\begin{proof}
Pulling back the universal fibration \w{\bB j} of \wref{eqhfibseq} along 
\w{\Phi:=i'\circ\Phi\sp{\ast}} yields the
following (homotopy) pullback square:
\mydiagram[\label{eqsectionphi}]{
\ar@{}[ddrrrr]|<<<<<<<<<<<<<<<<<<<<<<<<<<<<<
{\framebox{\scriptsize{PB}}}
\BG \ar@{.>}[rrd]\sp{\sigma}
\ar@/_{1.5pc}/[rrdd]\sb{=} \ar@/^{1.5pc}/[rrrrd]\sp{\Phi\sp{\ast}} &&&&\\
&& E\sb{\theta} \ar[rr] \ar[d]\sp{\theta} && \bB\GXp{\Xs}
\ar@{->>}[d]\sp{\bB j}  \\
&& \BG \ar[rr]\sp{\Phi} && \bB\GX{\bX}
}
\noindent so we can use \w{\Phi\sp{\ast}} to obtain a homotopy section
\w{\sigma:\BG\to E\sb{\theta}} as indicated.

We now use the lower left homotopy pullback square in
\wref{eqrowcol} to obtain a homotopy fixed point \w[:]{f:\EG\to\Xp}
\mydiagram[\label{eqfixedpointsig}]{
\ar @{} [ddrrrr] |<<<<<<<<<<<<<<<<<<<<<<<<<<<<<
{\framebox{\scriptsize{PB}}}
\EG \ar@{.>}[rrd]\sp{f}
\ar@/_{1.5pc}/[rrdd]\sb{\sigma\circ q} \ar@/^{1.5pc}/[rrrrd]\sp{=} &&&&\\
&& \Xp \ar[rr] \ar[d] && \EG
\ar@{->>}[d]\sp{q}  \\
&& E\sb{\theta} \ar[rr]\sp{\theta} && \BG~,
}
\noindent where \w[.]{\Xp\simeq\bX} Hence by Lemma \ref{lfhpt} we
obtain a pointed $G$-action on a pointed space \w{\Ys} homotopy
equivalent to $\bX$.
\end{proof}

\end{document}